\documentclass[american]{scrartcl}
\KOMAoptions{DIV=11, paper=letter, abstract=on}

\usepackage[utf8]{inputenc}
\usepackage[T1]{fontenc}
\usepackage{lmodern} 
\usepackage{babel, color, csquotes, microtype, fixltx2e}
\usepackage{amsmath, amssymb, amsthm, enumerate, subcaption}

\usepackage{siunitx}
\sisetup{%
   tight-spacing=true,%
   group-minimum-digits = 4,%
   group-digits = true,%
}
\usepackage{graphicx, pgfplots}
\usetikzlibrary{calc,matrix}
\makeatletter\g@addto@macro\@floatboxreset\centering\makeatother

\pgfplotsset{compat=newest,
   plot coordinates/math parser=false,
   scale only axis,
   xmajorgrids, xminorticks=false,
   ymajorgrids, yminorticks=false,
   every axis/.append style={font=\small, line width=0.75pt, mark = +,%
      scaled x ticks = false},
   legend style={at={(0.02,1.00)},anchor=north west,align=left,
      fill=none,draw=none,row sep=-0.4em},
}

\usepackage[style=numeric, backend=bibtex8]{biblatex}
\addbibresource{hk2014-variational.bib}
\providecommand{\acite}[1]{\citeauthor{#1}~\cite{#1}}
\DefineBibliographyStrings{german}{references = {References}}
\ExecuteBibliographyOptions{%
   url=true, eprint=true, doi=false, isbn=false, firstinits=true,
   maxnames=2, maxbibnames=99, babel=other,
}
\AtEveryBibitem{\ifentrytype{software}{}{\clearfield{url}}}
\newbibmacro{string+doi}[1]{%
  \iffieldundef{doi}{#1}{\href{http://dx.doi.org/\thefield{doi}}{#1}}}
\DeclareFieldFormat{title}{\usebibmacro{string+doi}{\mkbibemph{#1}}}
\DeclareFieldFormat[article]{title}{\usebibmacro{string+doi}{\mkbibquote{#1}}}
\renewbibmacro{in:}{}
\DeclareFieldFormat[article]{title}{#1}
\DeclareFieldFormat[inproceedings]{title}{#1}

\usepackage{hyperref}
\hypersetup{%
   pdfauthor  ={Weizhang Huang, Lennard Kamenski},
   pdftitle   ={A~geometric discretization and~a~simple implementation
      for~variational mesh generation and adaptation},
   pdfsubject ={}
}
\usepackage[capitalize]{cleveref}
\crefformat{section}{#2Sect.~#1#3}
\Crefformat{section}{#2Section~#1#3}
\crefmultiformat{section}{Sects.~#2#1#3}%
   { and~#2#1#3}{, #2#1#3}{ and~#2#1#3}
\crefformat{equation}{(#2#1#3)}
\Crefformat{equation}{Equation~(#2#1#3)}
\crefmultiformat{equation}{(#2#1#3)}%
   { and~(#2#1#3)}{, (#2#1#3)}{ and~(#2#1#3)}
\crefrangeformat{equation}{(#3#1#4) to~(#5#2#6)}

\providecommand{\abs}[1]{\lvert#1\rvert}
\providecommand{\Abs}[1]{\left\lvert#1\right\rvert}
\providecommand{\V}[1]{\boldsymbol{#1}}
\providecommand{\dx}{\,d\V{x}}

\providecommand{\p}[2]{\frac{\partial{}#1}{\partial{}#2}}
\providecommand{\Th}{\mathcal{T}_h}
\providecommand{\Thc}{\mathcal{T}_{c,h}}
\providecommand{\ThcO}{\mathcal{T}_{c0, h}}
\providecommand{\M}{\mathbb{M}}

\providecommand{\J}{\mathbb{J}}
\providecommand{\R}{\mathbb{R}}
\providecommand{\JMJ}{\J \M^{-1}\J^T}
\DeclareMathOperator{\diag}{diag}
\DeclareMathOperator{\sgn}{sgn}
\DeclareMathOperator{\tr}{tr}
\newtheorem{lemma}{\hspace{6mm}Lemma}[section]
\newtheorem{corollary}{\hspace{6mm}Corollary}[section]
\theoremstyle{remark}
\newtheorem{example}{\hspace{6mm}Example}[section]

\newenvironment{keywords}%
   {\begin{trivlist}\item[]{\bfseries\sffamily Key words:}~}
   {\end{trivlist}}
\newenvironment{AMS}%
   {\begin{trivlist}\item[]{\bfseries\sffamily AMS subject classifications:}~}
   {\end{trivlist}}


\begin{document}

\title{A~geometric discretization and~a~simple implementation
   for~variational mesh generation and~adaptation%
   \thanks{%
      Supported in~part by~%
      the~NSF (U.S.A.) under Grant DMS-1115118.%
      }%
}

\author{%
   Weizhang Huang%
   \thanks{%
      The University of~Kansas, Department of~Mathematics, Lawrence, KS~66045, U.S.A.
      (\href{mailto:whuang@ku.edu}{\nolinkurl{whuang@ku.edu}}).%
   }
   \and
   Lennard Kamenski%
   \thanks{%
      Weierstrass Institute for~Applied Analysis and~Stochastics, Berlin, Germany
      (\href{mailto:kamenski@wias-berlin.de}{\nolinkurl{kamenski@wias-berlin.de}}).%
   }
  }

\maketitle

\begin{abstract}
We present a simple direct discretization for functionals used in the variational mesh generation and adaptation.
Meshing functionals are discretized on simplicial meshes and the Jacobian matrix of the continuous coordinate transformation is approximated by the Jacobian matrices of affine mappings between elements.
The advantage of this direct geometric discretization is that it preserves the basic geometric structure of the continuous functional, which is useful in preventing strong decoupling or loss of integral constraints satisfied by the functional.
Moreover, the discretized functional is a function of the coordinates of mesh vertices and its derivatives have a simple analytical form, which allows a simple implementation of variational mesh generation and adaptation on computer.
Since the variational mesh adaptation is the base for a number of adaptive moving mesh and mesh smoothing methods, the result in this work can be used to develop simple implementations of those methods.
Numerical examples are given.
\end{abstract}

\begin{keywords}
   variational mesh generation,
   mesh adaptation,
   moving mesh
\end{keywords}

\begin{AMS}
  65N50, 
  65K10  
\end{AMS}


\section{Introduction}
\label{sect:introduction}

The basic idea of the variational approach of mesh generation and adaptation is to generate an adaptive mesh as an image of a given reference mesh under a coordinate transformation determined by a functional (which will hereafter be referred to as a meshing functional).
Typically, the meshing functional measures difficulties of the numerical approximation of the physical solution and involves a user-prescribed metric tensor or a monitor function to control the mesh adaptation.
The advantage of the variational approach is the relative ease of incorporating mesh requirements such as smoothness, adaptivity, or alignment in the formulation of the functional~\cite{BS82}.
The variational approach is commonly used to generate structured meshes but it can be employed to generate unstructured meshes as well~\cite{CHR99b}.
Moreover, it is the base for a number of adaptive moving mesh methods~\cite{HR11,HR99,HRR94a,LTZ01}.

A number of variational methods have been developed in the past; e.g.,\ see \acite{TWM85}, \acite{KS94}, \acite{Lis99}, \acite{HR11} and references therein.
Noticeably, \acite{Win81} proposed an equipotential method based on variable diffusion.
\acite{BS82} developed a method by combining mesh concentration, smoothness, and orthogonality.
\acite{Dvi91} used the energy of harmonic mappings as his meshing functional.
\acite{Knu96} and \acite{KR00} developed functionals based on the idea of conditioning the Jacobian matrix of the coordinate transformation. 
\acite{Hua01b} and \acite{HR11} developed functionals based on the so-called equidistribution and alignment conditions.

A common solution strategy for the existing variational methods is to first derive the Euler-Lagrange equation of the underlying meshing functional and then discretize it on either a physical or a computational mesh (cf.~\cref{fig:solution:strategies}).
If the descretization is done on a computational mesh, the Euler-Lagrange equation needs to be transformed by changing the roles of dependent and independent variables.
Although this strategy works well for the most cases, the corresponding formulation can become complicated and its implementation requires a serious effort, especially in three dimensions; cf.~\cite[Chapt.~6]{HR11}.
Moreover, the geometric structure of the meshing functional can be lost in the process of spatial discretization of the Euler-Lagrange equation.

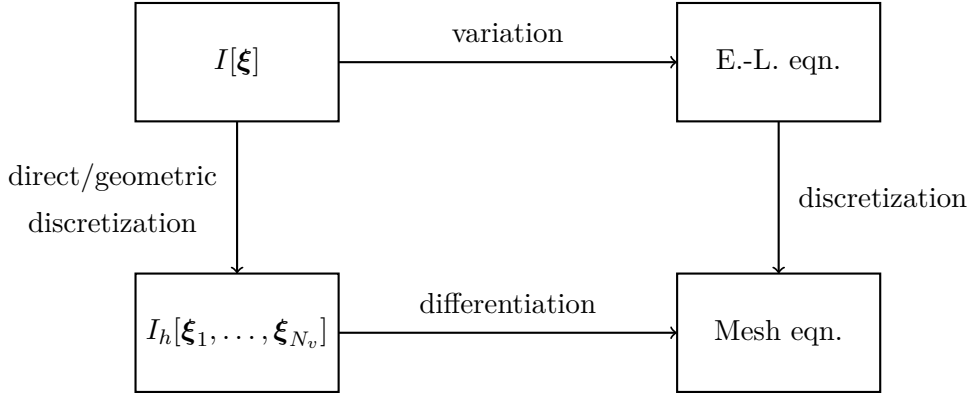
\begin{figure}
   \tikzset{my node/.code=\ifpgfmatrix\else\tikzset{matrix of nodes}\fi}
   \begin{tikzpicture}[every node/.style={my node},scale=0.45]
      \draw[thick] (0,0) rectangle (6,3.5);
      \node (node1) at (3,1.75) {$I[\V{\xi}]$\\};
      \draw[thick] (16,0) rectangle (22,3.5);
      \node (node2) at (19,1.75) {E.-L.\ eqn.\\};
      \draw[->,thick] (6,1.75)--(16,1.75);
      \node[above] at (11,1.75) {variation\\};
      \draw[thick] (16,-8) rectangle (22,-4.5);
      \node (node4) at (19,-6.25) {Mesh eqn.\\};
      \draw[->,thick] (19,0)--(19,-4.5);
      \node[right] at (19,-2.25) {discretization\\};
         \draw[thick] (0,-8) rectangle (6,-4.5);
         \node (node4) at (3,-6.25) {$I_h[\V{\xi}_1, \dotsc, \V{\xi}_{N_v} ]$\\};
         \draw[->,thick] (3,0)--(3,-4.5);
         \node[left] at (3,-2.25) {direct/geometric\\discretization\\};
         \draw[->,thick] (6,-6.25)--(16,-6.25);
         \node[above] at (11,-6.25) {differentiation\\};
   \end{tikzpicture}
   \caption{%
      Possible solution strategies for variational mesh generation and adaptation%
   }\label{fig:solution:strategies}
\end{figure}

The objective of this paper is to study a new discretization and solution strategy.
We consider simplicial meshes and approximate the underlying meshing functional directly.
Although the direct discretization of the variational problems is not new on itself, its employment in the context of variational mesh generation and adaptation is new.
The Jacobian matrix of the coordinate transformation involved in the meshing functional is not discretized directly; instead, it is approximated by the Jacobian matrices of affine mappings between simplicial elements.
The advantage of this geometric discretization is that it preserves the basic geometric structure of the continuous functional, which is useful in preventing strong decoupling or loss of integral constraints satisfied by the underling functional (cf.~\acite{Cas86}).
In particular, it preserves the coercivity and convexity for two examples of meshing functionals we consider (see \cref{sect:preservation}).
Moreover, as we will see in \cref{sect:formula}, the discretized functional is a function of the coordinates of vertices of the computational mesh and its derivatives have a simple analytical form.
This allows a simple (and parallel) implementation of the corresponding variational meshing method.

The outline of the paper is as follows.
\Cref{sect:mmpde} briefly describes the variational approach in mesh generation.
\Cref{sect:formula} presents the direct discretization for meshing functionals and gives the analytical formula for the derivatives of the discretized functional with respect to the computational coordinates of mesh vertices.
Several numerical examples are presented in \cref{sect:numerics}, followed by conclusions and further remarks in \cref{sect:conclusion}.
For completeness and for the convenience of users who prefer the physical coordinates as unknown variables, the derivatives of the discretized functional with respect to the physical coordinates are given in \cref{sect:x:der}.

\section{The variational approach for~mesh generation}
\label{sect:mmpde}

Let $\Omega$ and $\Omega_c$ be the physical and computational domains in $\R^d$ $(d \ge 1)$, which are assumed to be bounded, simply connected, and polygonal/polyhedral.
Generally speaking, $\Omega_c$ can be chosen to be the same as $\Omega$ but there are benefits to choose it to be convex, including that the to-be-determined coordinate transformation is less likely to be singular, see \acite{Dvi91}. 
We also assume that we are given a symmetric and uniformly positive definite metric tensor $\M = \M(\V{x})$ in $\Omega$,
which provides the information about the size and shape of mesh elements.
Typically, $\M$ is defined in a mesh adaptation process based on the physical solution, solution error, or other physical considerations.

Denote the coordinates on $\Omega$ and $\Omega_c$ by $\V{x}$ and $\V{\xi}$, the corresponding coordinate transformation by $\V{x} = \V{x}(\V{\xi})\colon \Omega_c \to \Omega$ and its inverse by $\V{\xi} = \V{\xi}(\V{x})\colon \Omega \to \Omega_c$.
Meshing functionals are commonly formulated in terms of the inverse coordinate transformation because the coordinate transformation determined in this way is less likely to be singular~\cite{Dvi91}.

We consider a general meshing functional
\begin{equation}
   I[\V{\xi}] = \int_\Omega G \left (\J, \det(\J), \M, \V{x} \right) \dx,
   \label{eq:fun}
\end{equation}
where $\J = \p{\V{\xi}}{\V{x}}$ is the Jacobian matrix of $\V{\xi} = \V{\xi}(\V{x})$ and $G$ is a given smooth function (with respect to all of its arguments).
This form is very general and includes many existing meshing functionals, e.g.,\ see \acite{KS94}, \acite{Lis99}, and \acite{HR11}.
To be instructive, we consider two examples in the following.
(For a detailed numerical comparison of various functionals see~\cite{HuaKamRus15}.)

\begin{example}[generalized Winslow's functional]
\label{ex:Winslow}
The first example is a generalization of Winslow's variable diffusion functional~\cite{Win81},
\begin{equation}
   I[\V{\xi}] = \int_\Omega \tr(\JMJ) \dx,
   \label{eq:fun:winslow}
\end{equation}
where $\tr(\cdot)$ denotes the trace of a matrix.
This functional has been used by many researchers, e.g.,\ see \citeauthor{HR97b}~\cite{HR97b, HR99}, \acite{LTZ01}, and \acite{BMR01}.
It is coercive and convex and therefore has a unique minimizer~\cite[Example 6.2.1]{HR11}.
\qed{}
\end{example}
\vspace{1ex}

\begin{example}[Huang's functional]
\label{ex:Huang}
The second functional is
\begin{equation}
   I[\V{\xi}]  
   = \theta \int_\Omega \sqrt{\det(\M)} {\left(\tr(\JMJ) \right)}^{\frac{dp}{2}} \dx
   + (1 - 2 \theta) d^{\frac{dp}{2}} \int_\Omega \sqrt{\det(\M)}
            {\left( \frac{\det(\J)}{\sqrt{\det(\M)}}\right)}^{p} \dx,
   \label{eq:fun:huang}
\end{equation}
where $0 \le \theta \le 1$ and $p > 0$ are dimensionless parameters.
This functional was proposed by Huang~\cite{Hua01b} based on the so-called alignment (first term) and equidistribution (second term) conditions.
For $0 < \theta \le \frac{1}{2}$, $d p \ge 2$, and $p \ge 1$, the functional is coercive and polyconvex and has a minimizer~\cite[Example 6.2.2]{HR11}.
Moreover, for $\theta = \frac{1}{2}$ and $d p = 2$ it reduces to
\[
   I[\V{\xi}]  = \theta \int_\Omega \sqrt{\det(\M)} \tr(\JMJ) \dx,
\]
which is the energy functional for a harmonic mapping from $\Omega$ to $\Omega_c$ (cf.~\acite{Dvi91}).
\qed{}
\end{example}
\vspace{1ex}

As mentioned in the introduction, a common strategy to compute the coordinate transformation for a given functional is to discretize and solve its Euler-Lagrange equation.
The derivation of the Euler-Lagrange equation for \cref{eq:fun} is standard (e.g., see~\cite[Chapter 6]{HR11}).
It reads as
\begin{equation}
   \frac{\delta I}{\delta \V{\xi}} 
      \equiv - \nabla \cdot \left( \p{G}{\J} + \p{G}{r} \det(\J) \J^{-1} \right)
      = 0
   ,
   \label{eq:EL:1}
\end{equation}
where $\frac{\delta I}{\delta \V{\xi}}$ denotes the functional derivative of $I[\V{\xi}]$ and $\p{G}{\J}$ and
$\p{G}{r} = \p{G}{\det(\J)}$ are the derivatives of $G$ with respect to its first and second arguments.\footnote{See \cref{SEC:scalar-by-matrix-der} for the notation of scalar-by-matrix derivatives.}
Notice that this equation is defined for the inverse coordinate transformation $\V{\xi}$.
Since the computational mesh of $\Omega_c$ is typically given while the physical mesh of $\Omega$ is to be determined, it is common to transform the mesh equation \cref{eq:EL:1} by exchanging the roles of the independent variable $\V{x}$ and the dependent variable $\V{\xi}$ and then discretize it on the computational mesh using finite difference or finite element methods.
The resulting (nonlinear) algebraic system is solved for the physical mesh.

Although \cref{eq:EL:1} is simple, the transformed mesh equation (after exchanging the roles of~$\V{x}$ and~$\V{\xi}$) becomes complicated and a serious effort is needed to implement its finite difference or finite element discretization on computer.
The process can be simplified by some degree by first discretizing \cref{eq:EL:1} directly on the physical mesh (which is normally not uniform and not rectangular), solving the resulting equations for the computational mesh, and then obtaining the new physical mesh by interpolation (see the next section for the detailed discussion for a similar procedure). 
However, the implementation of a finite difference or finite element discretization of \cref{eq:EL:1} still requires a non-trivial effort.
This is the motivation for us to seek a simpler discretization and a simpler implementation (see the next section).

Generally speaking, \cref{eq:EL:1} is highly nonlinear.
A useful strategy to solve such a nonlinear system is to use a time-varying approach or a moving mesh PDE (MMPDE) approach~\cite{HRR94b,HRR94a} in the context of dynamical mesh adaptation. 
An MMPDE is defined as a gradient flow equation of the functional \cref{eq:fun}, i.e.,
\begin{equation}
   \p{\V{\xi}}{t} 
      = - \frac{P}{\tau} \frac{\delta I}{\delta \V{\xi}}
   ,
   \label{eq:mmpde:1}
\end{equation}
where $t$ is the quasi-time, $\tau > 0$ is a constant parameter used to adjust the scale of mesh movement, and $P$ is a positive balancing function.
When it is desired to compute the physical coordinates directly, we can combine \cref{eq:mmpde:1} with the identity
\[
   \p{\V{x}}{t} + \J^{-1} \p{\V{\xi}}{t} = 0 
\]
to obtain
\begin{equation*}
   \p{\V{x}}{t} = \frac{P}{\tau} \J^{-1} \p{I}{\V{\xi}}
   .
\end{equation*}

\section{Direct discretization and~analytical formulas for~derivatives of~the~discretized functional}
\label{sect:formula}

\subsection{Direct discretization}

Denote the physical and computational meshes by $\Th$ and $\Thc$ and assume they have the same numbers of elements and vertices and the same connectivity.
Since we consider only simplicial meshes, for any element $K\in\Th$ there exist a corresponding element $K_c\in\Thc$ and an invertible affine mapping $F_K\colon K_c \to K$ such that $K = F_K(K_c)$.
With this notation, we approximate the functional \cref{eq:fun} directly as
\begin{align}
   I[\V{\xi}] 
     &= \int_\Omega G(\J, \det(\J), \M, \V{x}) \dx
      = \sum\limits_{K \in \Th} \int_K G \left(\J, \det(\J), \M, \V{x}\right) \dx
   \notag\\ 
   &\approx \sum\limits_{K \in \Th} 
      \Abs{K} G \left( {(F_K')}^{-1}, \det{(F_K')}^{-1}, \M(\V{x}_K), \V{x}_K \right) ,
   \label{eq:dis:fun:1}
\end{align}
where $\Abs{K}$ and $\V{x}_K$ are the volume and the center of $K$ and $F_K'$ is the Jacobian matrix of $F_K$.

Note that in \cref{eq:dis:fun:1} we have used the mid-point quadrature formula for the involved integrals and approximated $\J$ using ${(F_K')}^{-1}$ (instead of directly discretizing $\J$ on the mesh).
The latter enables the discretized functional to preserve the basic geometric structure of the underlying functional.
For example, the integrand of functional \cref{eq:fun:winslow} is the trace of the matrix $\JMJ$.
For this functional, the summand of \cref{eq:dis:fun:1} is the product of the volume of $K$ and the trace of the matrix ${(F_K')}^{-1} \M^{-1}(\V{x}_K)  {(F_K')}^{-T}$, an approximation of $\JMJ$ on $K$.
For functional \cref{eq:fun:huang}, the summand of \cref{eq:dis:fun:1} is a linear combination of the terms corresponding to the alignment and equidistribution conditions as in the continuous functional.
The preservation of the geometric properties is useful in preventing strong decoupling or loss of integral constraints satisfied by the underlying functional (cf.\ Castillo~\cite{Cas86}).
Moreover, as we will see in \cref{eq:FK:1}, $F_K'$ can be computed using the edge matrices of $K$ and $K_c$ (cf. \cref{eq:edge:1}).
Therefore, no direct discretization of derivatives is involved in \cref{eq:dis:fun:1}.

Notice that each summand in \cref{eq:dis:fun:1} is a function of the coordinates of vertices of $K_c$, i.e., 
\begin{equation}
   I_K(\V{\xi}_0^K, \dotsc,  \V{\xi}_d^K) 
      = G \left( {(F_K')}^{-1}, \det{(F_K')}^{-1}, \M(\V{x}_K), \V{x}_K \right)
   ,
   \label{eq:IK:1}
\end{equation}
while the sum is a function of the coordinates of all vertices of the mesh $\Thc$,
\begin{equation}
   I_h(\V{\xi}_1, \dotsc, \V{\xi}_{N_v}) 
      = \sum\limits_{K \in \Th} \Abs{K} I_K \left(\V{\xi}_0^K, \dotsc,  \V{\xi}_d^K \right)
      .
   \label{eq:Ih:1}
\end{equation}
The key to our approach is to find the derivatives of $I_h$ with respect to $\V{\xi}_1, \dotsc, \V{\xi}_{N_v}$.
This is done by obtaining and assembling the elementwise derivatives of $I_K$ with respect to $\V{\xi}_0^K, \dotsc,  \V{\xi}_d^K$.
We first recall some notation and results for scalar-by-matrix derivatives.

\subsection{Scalar-by-matrix derivatives}
\label{SEC:scalar-by-matrix-der}

Let $f = f(A)$ be a scalar function of a matrix $A \subset \R^{m\times n}$.
The scalar-by-matrix derivative of $f$ with respect to $A$ is defined as
\begin{equation}
   \p{f}{A} 
      = \begin{bmatrix} 
         \p{f}{A_{11}}  & \cdots & \p{f}{A_{m1}} \\
         \vdots         &        & \vdots \\
         \p{f}{A_{1 n}} & \cdots & \p{f}{A_{m n}}
      \end{bmatrix}_{n\times m}
   \quad \text{or} \quad 
   {\left(\p{f}{A} \right)}_{i,j} = \p{f}{A_{j,i}}
   .
\label{def-1}
\end{equation}
The chain rule of differentiation involving matrices (with respect to a real parameter $t$) is
\begin{equation}
   \p{f}{t} 
      = \sum_{ij} \p{f}{A_{j,i}} \p{A_{j,i}}{t}
      = \sum_{ij} {\left( \p{f}{A} \right)}_{i,j} \p{A_{j,i}}{t}
      = \tr \left( \p{f}{A} \p{A}{t} \right)
      .
\label{chain-rule-1}
\end{equation}

Hereafter, we always assume that (i) arguments of the matrix trace ($\tr(\cdot)$) and
determinant ($\det(\cdot)$) are square matrices; (ii) matrix products are meaningful;
(iii) matrices are invertible when their inverses are involved.

The following four lemmas serve as the basic tools in the application of scalar-by-matrix differentiation.
The first two can be verified directly while the third and fourth
can be proven using the determinant expansion by minors and by differentiating the identity $A A^{-1} = I$,
respectively.

\begin{lemma}
\label{lem:3.1}
\begin{align*}
 \tr (A^T) & = \tr(A),
\\
\tr (AB) & = \tr (BA),
\\
\tr (ABC) & = \tr (CAB) = \tr (BCA) .
\end{align*}
\end{lemma}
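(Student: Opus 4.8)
The plan is to prove the three identities in order, using only the definition of the trace as the sum of diagonal entries together with the entrywise formula for a matrix product; all three follow directly, exactly as the paper anticipates. First I would dispose of $\tr(A^T) = \tr(A)$ by noting that transposition fixes the diagonal: since $(A^T)_{ii} = A_{ii}$ for every $i$, summing over $i$ gives $\tr(A^T) = \sum_i (A^T)_{ii} = \sum_i A_{ii} = \tr(A)$.

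Next I would establish the two-factor cyclic identity $\tr(AB) = \tr(BA)$ by expanding both sides. Writing $\tr(AB) = \sum_i (AB)_{ii} = \sum_i \sum_j A_{ij} B_{ji}$ and $\tr(BA) = \sum_j (BA)_{jj} = \sum_j \sum_i B_{ji} A_{ij}$, the two double sums differ only in the order of summation and in the names of the dummy indices, so they agree. The single point to verify is conformability, but this is guaranteed by the standing assumptions (i)--(iii) stated just above the lemma, so the products and the diagonal sums are all well defined.

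Finally, the three-factor identity $\tr(ABC) = \tr(CAB) = \tr(BCA)$ needs no fresh computation: I would reduce it to the two-factor case by grouping the product appropriately. Treating $AB$ as a single matrix and $C$ as the other factor gives $\tr\bigl((AB)C\bigr) = \tr\bigl(C(AB)\bigr) = \tr(CAB)$, while grouping instead as $A(BC)$ gives $\tr\bigl(A(BC)\bigr) = \tr\bigl((BC)A\bigr) = \tr(BCA)$. Chaining the two applications yields all the stated equalities.

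I do not expect a genuine obstacle here, since every line is an immediate consequence of the definition of the trace. The closest thing to a subtlety is keeping the index bookkeeping in the two-factor expansion clean, and phrasing the reduction of the three-factor case so that it visibly invokes the already-proved two-factor identity rather than assuming what is to be shown.
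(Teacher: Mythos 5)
Your proof is correct and matches the paper's intent: the paper offers no written proof, stating only that these identities ``can be verified directly,'' and your direct verification (diagonal invariance under transposition, the double-sum expansion for $\tr(AB)=\tr(BA)$, and reduction of the three-factor case by grouping) is exactly that standard argument.
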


\begin{lemma}
\label{lem:3.2}
\[
\frac{\partial \tr (A)}{\partial A} = I.
\]
\end{lemma}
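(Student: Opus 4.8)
The plan is to verify the identity directly from the definitions, since the trace is linear in the entries of $A$ and its scalar-by-matrix derivative is therefore immediate. First I would write the trace as the sum of its diagonal entries, $\tr(A) = \sum_k A_{kk}$, and then invoke the definition \cref{def-1}, according to which the $(i,j)$ entry of $\partial \tr(A)/\partial A$ is $\partial \tr(A)/\partial A_{j,i}$.

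The one substantive step is the entrywise differentiation. Differentiating $\sum_k A_{kk}$ with respect to a single entry $A_{j,i}$ retains only the diagonal term with $k = j = i$, so that $\partial \tr(A)/\partial A_{j,i} = \delta_{ij}$, the Kronecker delta. Hence the $(i,j)$ entry of the derivative matrix is $\delta_{ij}$, which is exactly the $(i,j)$ entry of the identity, and the claim $\partial \tr(A)/\partial A = I$ follows at once. This is consistent with the chain rule \cref{chain-rule-1}: substituting $\partial \tr(A)/\partial A = I$ gives $\partial \tr(A)/\partial t = \tr(\partial A/\partial t)$, as it must.

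I do not anticipate any genuine obstacle here; the single point that deserves a moment's care is the transposition built into \cref{def-1}, whereby the $(i,j)$ entry of the derivative is taken with respect to $A_{j,i}$ rather than $A_{i,j}$. Because the answer is the symmetric identity matrix, this transposition is invisible in the final result, but I would track it explicitly so that the same bookkeeping carries over cleanly to the less symmetric derivatives of the determinant and the inverse treated in the following lemmas.
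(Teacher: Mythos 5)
Your proof is correct and matches the paper's intent exactly: the paper offers no written argument for this lemma, stating only that it "can be verified directly," and your entrywise computation $\partial\tr(A)/\partial A_{j,i} = \delta_{ij}$ is precisely that direct verification. Your remark about the transposition in the definition being invisible here (because $I$ is symmetric) but mattering for the later lemmas is a sensible observation and introduces no gap.
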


\begin{lemma}
\label{lem:3.3}
\[
\frac{\partial \det (A)}{\partial A} = \det(A) \; A^{-1}.
\]
\end{lemma}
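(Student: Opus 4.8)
The plan is to combine the Laplace (cofactor) expansion of the determinant with the transpose convention for scalar-by-matrix derivatives fixed in \cref{def-1}. Write $C_{kl} = (-1)^{k+l} M_{kl}$ for the $(k,l)$ cofactor of $A$, where $M_{kl}$ is the minor obtained by deleting row $k$ and column $l$. First I would fix a pair of indices $(k,l)$ and expand the determinant along row $k$,
\[
   \det(A) = \sum_{j} A_{kj}\, C_{kj}.
\]

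The key observation is that each cofactor $C_{kj}$ depends only on the entries of $A$ lying outside row $k$, since row $k$ is precisely the row deleted in forming the corresponding minor. Consequently, differentiating the expansion above with respect to $A_{kl}$ leaves the cofactors untouched and selects only the $j = l$ term, yielding the entrywise identity
\[
   \p{\det(A)}{A_{kl}} = C_{kl}.
\]

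Next I would connect the cofactor matrix to the inverse through the adjugate relation $A\,\operatorname{adj}(A) = \det(A)\, I$, where $\operatorname{adj}(A)_{ij} = C_{ji}$ (this follows from the same row expansion, the off-diagonal entries vanishing because they amount to expanding a matrix with two equal rows). Under the standing assumption that $A$ is invertible this gives $A^{-1} = \operatorname{adj}(A)/\det(A)$, hence ${\left(\det(A)\, A^{-1}\right)}_{ij} = C_{ji}$. Finally, applying the transpose convention of \cref{def-1},
\[
   {\left( \p{\det(A)}{A} \right)}_{i,j}
      = \p{\det(A)}{A_{j,i}}
      = C_{ji}
      = {\left( \det(A)\, A^{-1} \right)}_{i,j},
\]
which is the claimed formula.

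The only real subtlety — and the step I would watch most carefully — is the transpose built into \cref{def-1}. The naive gradient, whose $(k,l)$ entry is $\p{\det(A)}{A_{kl}} = C_{kl}$, equals $\operatorname{adj}(A)^T$, so without the index swap one would arrive at $\det(A)\, A^{-T}$ rather than $\det(A)\, A^{-1}$. The transpose convention is exactly what reconciles the two, so I would keep the index bookkeeping explicit throughout instead of suppressing it.
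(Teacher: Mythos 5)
Your proof is correct and follows exactly the route the paper indicates for this lemma (the paper only remarks that it ``can be proven using the determinant expansion by minors'' without writing out the details). Your careful tracking of the transpose convention in the definition of the scalar-by-matrix derivative is precisely the bookkeeping needed to land on $\det(A)\,A^{-1}$ rather than $\det(A)\,A^{-T}$, so nothing is missing.
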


\begin{lemma}
\label{lem:3.4}
\[
\frac{\partial A^{-1}}{\partial t} = - A^{-1} \frac{\partial A}{\partial t} A^{-1}.
\]
\end{lemma}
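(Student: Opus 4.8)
The plan is to follow exactly the hint given in the text preceding the lemma, namely to differentiate the defining identity $A A^{-1} = I$ with respect to the real parameter $t$. First I would record that, under the standing assumption that $A$ is invertible, the inverse $A^{-1}$ is itself a differentiable matrix-valued function of $t$ (its entries are rational functions of the entries of $A$ with nonvanishing denominator $\det(A)$), so the symbol $\p{A^{-1}}{t}$ is well defined. Since $A A^{-1}$ equals the constant identity matrix $I$, whose entries are independent of $t$, I have $\p{(A A^{-1})}{t} = 0$.

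Next I would apply the product rule to the left-hand side. Using the entrywise definition $(A A^{-1})_{ik} = \sum_j A_{ij} (A^{-1})_{jk}$ and the linearity of differentiation, the ordinary scalar product rule lifts directly to matrix products, giving the matrix identity
\[
   \p{A}{t} \, A^{-1} + A \, \p{A^{-1}}{t} = 0 .
\]
The only verification needed at this step is that the scalar product rule extends to matrix multiplication, which is immediate from the entrywise formula above.

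Finally I would solve this equation for the desired derivative. Rearranging gives $A \, \p{A^{-1}}{t} = - \p{A}{t} \, A^{-1}$, and left-multiplying both sides by $A^{-1}$ yields the claimed formula
\[
   \p{A^{-1}}{t} = - A^{-1} \, \p{A}{t} \, A^{-1} .
\]
I do not expect any genuine obstacle here: the computation is entirely routine, and the only points requiring care are the invertibility hypothesis (which guarantees that $A^{-1}$ exists and is differentiable) and the elementary observation that differentiation commutes with matrix multiplication via the product rule. Both are standard and require no further machinery beyond what is already assumed in the excerpt.
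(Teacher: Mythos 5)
Your proof is correct and follows exactly the route the paper indicates for this lemma, namely differentiating the identity $A A^{-1} = I$ with the product rule and then left-multiplying by $A^{-1}$. No issues.
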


We now derive several identities which we need in our application.
\begin{corollary}
\label{cor-1}
If a symmetric matrix $\M$ is independent of $A$, then
\begin{align}
   \p{\tr(A \M A^T)}{A} 
      &= 2 \M A^T ,
   \label{eq:diff:5}
   \\
   \p{\tr(A^{-T} \M^{-1} A^{-1})}{A} 
      &= - 2 A^{-1} A^{-T} \M^{-1} A^{-1}.
   \label{eq:diff:7}
\end{align}
Moreover, if $A$ is independent of $\M$, then
\begin{align}
   \p{\tr (A \M A^T)}{\M} & = A^T A,
\label{eq:diff:7+1}
\\
   \p{\tr (A \M^{-1} A^T)}{\M} & = - \M^{-1} A^T A \M^{-1}.
\label{eq:diff:7+2}
\end{align}
\end{corollary}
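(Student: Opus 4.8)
The plan is to treat all four identities uniformly by combining the chain rule \cref{chain-rule-1} with the ordinary product rule for differentiation of traces and, where inverses appear, the inverse-derivative identity of \cref{lem:3.4}. For each identity I would introduce an auxiliary real parameter $t$, differentiate the scalar trace expression with respect to $t$, and then rearrange the result into the canonical form $\tr\left(M\,\p{X}{t}\right)$, where $X$ is the matrix ($A$ or $\M$) against which we differentiate. Matching this against $\tr\left(\p{f}{X}\p{X}{t}\right)$ from \cref{chain-rule-1} then lets me read off $\p{f}{X}=M$. The workhorse throughout is \cref{lem:3.1}, whose cyclic and transpose invariances allow me to move the factor $\p{X}{t}$ into the rightmost position so that the comparison can be made.

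For \cref{eq:diff:5}, differentiating $\tr(A\M A^T)$ by the product rule gives two terms; using the transpose invariance $\tr(B)=\tr(B^T)$ together with the symmetry of $\M$ shows they are equal, which yields $2\,\tr\left(\M A^T\,\p{A}{t}\right)$ and hence $2\M A^T$. Identities \cref{eq:diff:7+1,eq:diff:7+2} are the $\M$-dependent analogues and are the easiest: for the first, differentiation is immediate and a single cyclic permutation produces $A^T A$; for the second, I would first invoke \cref{lem:3.4} to write $\p{\M^{-1}}{t}=-\M^{-1}\,\p{\M}{t}\,\M^{-1}$, after which one cyclic permutation gives $-\M^{-1}A^T A\M^{-1}$.

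The most delicate case is \cref{eq:diff:7}. Writing $B=A^{-1}$ so that the scalar function becomes $\tr(B^T\M^{-1}B)$, I would apply \cref{lem:3.4} in the form $\p{B}{t}=-B\,\p{A}{t}\,B$ (and its transpose) to each of the two product-rule terms. The obstacle here is purely bookkeeping: each term carries a transposed copy of $\p{A}{t}$ buried inside a product of four factors, and one must verify—again via the transpose invariance of the trace and the symmetry of $\M^{-1}$—that the two terms coincide before collecting them. Once they are combined and a final cyclic permutation is applied, the result is $-2\,\tr\left(A^{-1}A^{-T}\M^{-1}A^{-1}\,\p{A}{t}\right)$, giving \cref{eq:diff:7}. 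Notably, no step requires \cref{lem:3.2} or \cref{lem:3.3}; the entire corollary rests on \cref{chain-rule-1}, \cref{lem:3.1}, \cref{lem:3.4}, and the symmetry of $\M$.
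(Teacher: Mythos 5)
Your proposal is correct and follows essentially the same route as the paper: the chain rule \cref{chain-rule-1}, the cyclic and transpose invariances of \cref{lem:3.1}, the symmetry of $\M$, and \cref{lem:3.4} for the inverse terms, with the derivative read off by bringing $\p{X}{t}$ to the rightmost position inside the trace. The only cosmetic difference is that the paper proves \cref{eq:diff:7} by reusing \cref{eq:diff:5} with $A^{-T}$ as the intermediate variable rather than expanding the product rule from scratch, which changes nothing of substance.
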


\begin{proof}
Let $t$ be an entry of $A$.
Using \cref{lem:3.1,lem:3.2}, we have
\begin{align*}
\p{\tr(A \M A^T)}{t} & = \tr \left ( \p{\tr(A \M A^T)}{(A \M A^T)} \p{(A \M A^T)}{t}\right)
 = \tr \left ( \p{(A \M A^T)}{t}\right)
\\
& = \tr \left ( \p{A}{t}\M A^T + A \M \p{A^T}{t} \right)
 = \tr \left ( \p{A}{t}\M A^T\right ) + \tr \left (A \M \p{A^T}{t} \right)
\\
& = \tr \left ( \M A^T \p{A}{t}\right ) + \tr \left ( \p{A}{t} \M A^T \right)
 = \tr \left (2 \M A^T \p{A}{t}\right ) .
\end{align*}
From the chain rule \cref{chain-rule-1}, this gives \cref{eq:diff:5}.

Moreover, using \cref{lem:3.1,lem:3.4} and identity \cref{eq:diff:5}, we have
\begin{align*}
\p{\tr(A^{-T} \M^{-1} A^{-1})}{t}  & = \tr \left ( \p{\tr(A^{-T} \M^{-1} A^{-1})}{A^{-T}}\p{A^{-T}}{t} \right )
\\
& = \tr \left ( 2 \M^{-1} A^{-1} \p{A^{-T}}{t} \right )
 = \tr \left ( 2 \M^{-1} A^{-1} (-A^{-1} \p{A^T}{t} A^{-T}) \right )
\\
& = \tr \left ( - 2 A^{-1} \p{A}{t} A^{-1} A^{-T} \M^{-1} \right )
 = \tr \left ( - 2 A^{-1} A^{-T} \M^{-1} A^{-1} \p{A}{t}  \right ),
\end{align*}
which gives \cref{eq:diff:7}.

Identities \cref{eq:diff:7+1,eq:diff:7+2} can be proven similarly.
\end{proof}

Using the above results, we can find the expressions for $\p{G}{\J}$ and $\p{G}{r}$, which are needed to compute the derivatives of the discrete functional \cref{eq:dis:fun:1}, for functionals \cref{eq:fun:winslow} and \cref{eq:fun:huang}.

\begin{example}[generalized Winslow's functional]
\label{ex:winslow:2}
   For the functional \cref{eq:fun:winslow} we have
\begin{equation}
   \begin{cases}
      \p{G}{\J} = 2 \M^{-1} \J^T,
      \\
      \p{G}{r} = 0 .
   \end{cases}
   \label{eq:winslow:2}
\end{equation}
\end{example}

\begin{example}[Huang's functional]
\label{ex:huang:2}
For the functional \cref{eq:fun:huang} we have
\begin{equation}
   \begin{cases}
      \p{G}{\J} =  d p \theta \sqrt{\det(\M)}  {\left( \tr(\JMJ )\right )}^{\frac{d p}{2}-1} \M^{-1} \J^T,
      \\
      \p{G}{r} = p (1-2\theta) d^{\frac{d p}{2}} \det{(\M)}^{\frac{1-p}{2}} \det{(\J)}^{p-1} .
   \end{cases}
   \label{eq:huang:2}
\end{equation}
\end{example}

\subsection{Analytical formulas for~derivatives of~the~discretized functional}

The coordinates $\V{x}_0^K,\dotsc,\V{x}_d^K$ of the vertices of $K$ and the coordinates $\V{\xi}_0^K,\dotsc,\V{\xi}_d^K$ of the vertices of $K_c$ are related by
\[
   \V{x}_i^K - \V{x}_0^K = F_K' (\V{\xi}_i^K -  \V{\xi}_0^K),
   \quad i = 1, \dotsc, d
   ,
\]
or, in matrix form,
\[
   [
   \V{x}_1^K - \V{x}_0^K, \dotsc, \V{x}_d^K - \V{x}_0^K] 
      = F_K' [\V{\xi}_1^K -  \V{\xi}_0^K, \dotsc, \V{\xi}_d^K - \V{\xi}_0^K
   ]
   .
\]
Thus,
\begin{equation}
   F_K' = E_K \hat{E}_K^{-1}
   \qquad \text{and} \qquad 
   {(F_K')}^{-1} = \hat{E}_K E_K^{-1}
   ,
   \label{eq:FK:1}
\end{equation}
where $E_K$ and $\hat{E}_K$ are the edge matrices for $K$ and $K_c$,
\begin{equation}
   E_K = [\V{x}_1^K - \V{x}_0^K, \dotsc, \V{x}_d^K - \V{x}_0^K]
   \qquad \text{and} \qquad 
   \hat{E}_K = [\V{\xi}_1^K -  \V{\xi}_0^K, \dotsc, \V{\xi}_d^K -  \V{\xi}_0^K]
   .
   \label{eq:edge:1}
\end{equation}

We now derive the derivatives of $I_K$.
Using \cref{lem:3.1,lem:3.3,eq:FK:1}, we have
\begin{align}
   \p{I_K}{t} 
      &= \tr\left( \p{G}{\J} \p{{(F_K')}^{-1}}{t} \right)
         + \p{G}{r} \p{\det{(F_K')}^{-1}}{t}
\notag  \\
      &= \tr\left( \p{G}{\J} \p{\hat{E}_K}{t} E_K^{-1} \right)
         + \p{G}{r} \det{(E_K)}^{-1} \p{\det(\hat{E}_K)}{t}
 \notag  \\
      &= \tr\left( E_K^{-1} \p{G}{\J} \p{\hat{E}_K}{t} \right)
         + \p{G}{r} \frac{\det(\hat{E}_K)}{\det(E_K)} 
            \tr\left( \hat{E}_K^{-1} \p{\hat{E}_K}{t} \right) ,
   \label{eq:I:K:1}
\end{align}
where 
\begin{align*}
\p{G}{\J} & = \p{G}{\J}(\hat{E}_K E_K^{-1}, \frac{\det(\hat{E}_K)}{\det(E_K)} , \M(\V{x}_K), \V{x}_K),
\\
\p{G}{r} & = \p{G}{r}(\hat{E}_K E_K^{-1}, \frac{\det(\hat{E}_K)}{\det(E_K)} , \M(\V{x}_K), \V{x}_K) .
\end{align*}
Letting $t$ be any entry of the matrix $[\V{\xi}_1^K, \dotsc, \V{\xi}_d^K]$, we notice that
\[
   \p{\hat{E}_K}{t} = \p{[\V{\xi}_1^K,\dotsc,\V{\xi}_d^K]}{t} .
\]
Combining this with \cref{eq:I:K:1}, we get
\begin{equation}
   \p{I_K}{[\V{\xi}_1^K,\dotsc,\V{\xi}_d^K]} 
      = E_K^{-1} \p{G}{\J} + \p{G}{r} \frac{\det(\hat{E}_K)}{\det(E_K)} \hat{E}_K^{-1} .
   \label{eq:der:1}
\end{equation}
Moreover, for $j = 1, \dotsc, d$, from \cref{eq:I:K:1} and the equality
\[
   \p{\hat{E}_K}{\xi_{0}^{K (j)}} = - \sum_{i=1}^d \p{\hat{E}_K}{\xi_{i}^{K (j)}},
\]
where $\xi_{i}^{K (j)}$ denotes the $j^{\text{th}}$ component of $\V{\xi}_i^K$, we have
\begin{align*}
   \p{I_K}{\xi_{0}^{K (j)}} 
   &= - \sum_{i=1}^d
      \left[ 
         \tr\left( \p{G}{\J} \p{\hat{E}_K}{\xi_{i}^{K (j)}} E_K^{-1} \right)
            + \p{G}{r} \frac{\det(\hat{E}_K)}{\det(E_K)} 
            \tr\left( \hat{E}_K^{-1} \p{\hat{E}_K}{\xi_{i}^{K (j)}} \right)
      \right]
   \\
   &= - \sum_{i=1}^d {\left(\p{I_K}{[\V{\xi}_1^K,\dotsc,\V{\xi}_d^K]}\right)}_{i,j} ,
\end{align*}
which gives
\begin{equation}
   \p{I_K}{\V{\xi}_0^K} 
      = - \V{e}^T \p{I_K}{[\V{\xi}_1^K,\dotsc,\V{\xi}_d^K]}
   ,
   \qquad
   \V{e} = {[1, \dotsc, 1]}^T
   . 
   \label{eq:der:2}
\end{equation}

To summarize, the derivatives of $I_h$ with respect to $\V{\xi}_{1},\dotsc,\V{\xi}_{N_v}$ are computed as follows:
\begin{enumerate}[(i)]
   \item Compute the edge matrices $E_K$, $\hat{E}_K$ and their inverses from \cref{eq:edge:1};
   \item Compute $F_K'$ and its inverse via \cref{eq:FK:1} and quantities $\p{G}{\J}$ and $\p{G}{r}$ through \cref{eq:winslow:2} or \cref{eq:huang:2};
   \item Compute the derivatives of $I_K$ with respect to $\V{\xi}_{0}^K,\dotsc,\V{\xi}_{d}^K$ through \cref{eq:der:1,eq:der:2};
   \item Finally, the derivatives of $I_h$ with respect to $\V{\xi}_{1},\dotsc,\V{\xi}_{N_v}$ are obtained by assembling the element-wise derivatives (cf.~\cref{eq:Ih:1}).
\end{enumerate}

The mesh equation for $\V{\xi}_{1},\dotsc,\V{\xi}_{N_v}$ reads as
\begin{equation}
   \p{I_h}{[\V{\xi}_{1},\dotsc,\V{\xi}_{N_v}]} = 0
   .
   \label{eq:mesheq:1}
\end{equation}
This equation is analytical and, as for the standard finite element computation, the elementwise derivatives can be computed in parallel for all elements and then assembled together to form the global derivatives.
Moreover, the Jacobian matrix for \cref{eq:mesheq:1} is sparse.
Its analytical expression is harder to obtain but its finite difference approximation can be computed in parallel as well.

Note that equation \cref{eq:mesheq:1} can be highly nonlinear.
As in the continuous situation, we can use the MMPDE approach (cf.~\cref{eq:mmpde:1}), i.e.,
\begin{equation}
   \p{[\V{\xi}_{1},\dotsc,\V{\xi}_{N_v}]}{t}
      = -\frac{1}{\tau} {\left( \p{I_h}{[\V{\xi}_{1},\dotsc,\V{\xi}_{N_v}]} \right )}^T P,
   \label{eq:mmpde:3}
\end{equation}
where the balancing factor $P$ is now a diagonal $N_v \times N_v$ matrix.
Since mesh concentration should not be affected by scaling transformations of $\M$, we choose $P$ such that both sides of \cref{eq:mmpde:3} are homogeneous in the dimension of $\M$.
For example, from \cref{eq:winslow:2,eq:huang:2,eq:der:1} we find that the dimension of $P$ is
\[
   [P] =
   \begin{cases}
      [\M],
         & \text{for functional \cref{eq:fun:winslow}} \\
      {[\M]}^{\frac{d (p-1)}{2}} ,
         & \text{for functional \cref{eq:fun:huang}}
         ,
   \end{cases}
\]
where $[P]$ and $[\M]$ denote the dimension of $P$ and $\M$, respectively.
A reasonable choice for $[\M]$ is $[\M] = {\det(\M)}^{\frac{1}{d}}$.
Hence, we can choose
\begin{equation}
   P = \diag(P_1,\dotsc,P_{N_v})
   \quad \text{with} \quad
   P_i =
      \begin{cases}
         {\det(\M(\V{x}_i))}^{\frac{1}{d}},
            & \text{for functional \cref{eq:fun:winslow}} \\
         {\det(\M(\V{x}_i))}^{\frac{p-1}{2}},
            & \text{for functional \cref{eq:fun:huang}} .
   \end{cases}
   \label{eqP:1}
\end{equation}
With this choice of $P$, the MMPDE~\cref{eq:mmpde:3} is invariant under the scaling transformation $\M \to c \, \M$ for any positive constant $c$.

The mesh equation \cref{eq:mmpde:3} can be written more explicitly using local mesh velocities.
From \cref{eq:Ih:1}, we have
\[
   \p{I_h}{\V{\xi}_i} = \sum_{K\in \omega_i} |K| \p{I_K}{\V{\xi}_i} ,
\]
where $\omega_i$ is the element patch associated with vertex $\V{\xi}_i$.
From this we can rewrite \cref{eq:mmpde:3} into
\begin{equation}
\label{eq:mmpde:4}
\p{\V{\xi}_i}{t} = \frac{P_i}{\tau} \sum_{K \in \omega_i} |K| \V{v}_{i_K}^K , \quad i = 1, \dotsc, N_v
\end{equation}
where $i_K$ and $\V{v}_{i_K}^K$ are the local index and velocity of vertex $\V{\xi}_i$ on the element $K$,
respectively. The local velocities are defined as
\[
   \begin{bmatrix} {(\V{v}_0^K)}^T \\ \vdots \\ {(\V{v}_d^K)}^T \end{bmatrix}
=  - \p{I_K}{[\V{\xi}_0^K,\dotsc,\V{\xi}_d^K]} .
\]
From \cref{eq:der:1,eq:der:2}, we have
\begin{equation}
\label{eq:mmpde:5}
\begin{bmatrix} {(\V{v}_1^K)}^T \\ \vdots \\ {(\V{v}_d^K)}^T \end{bmatrix}
= - E_K^{-1} \p{G}{\J} - \p{G}{r} \frac{\det(\hat{E}_K)}{\det(E_K)} \hat{E}_K^{-1}, 
\quad
\V{v}_0^K = - \sum_{i=1}^d \V{v}_d^K .
\end{equation}

The MMPDE~\cref{eq:mmpde:4} should be modified properly for boundary vertices.
For example, if $\V{\xi}_i$ is a fixed boundary vertex, we replace the corresponding equation by
\begin{equation}
   \p{\V{\xi}_i}{t} = 0.
   \label{eq:fixed-bc}
\end{equation}
When $\V{\xi}_i$ is allowed to move on a boundary curve (in 2D) or surface (in 3D) represented by
\[
   \phi(\V{\xi}) = 0,
\]
then the mesh velocity $\p{\V{\xi}_i}{t}$ needs to be modified such that its normal component along the curve or surface is zero, i.e.,
\[
   \nabla \phi (\V{\xi}_i) \cdot \p{\V{\xi}_i}{t} = 0.
\]

The MMPDE~\cref{eq:mmpde:4} (with suitable modifications for boundary vertices) can be integrated from $t_n$ to $t_{n+1}$ for the new computational mesh $\Thc$.
Once $\Thc$ has been computed, the new physical mesh $\widetilde{\Th}$ is obtained via linear interpolation: if the correspondence between $\Thc$ and $\Th$ (current physical mesh) is
\[
   \V{x} = \Phi_h(\V{\xi})\colon \Omega_c \to \Omega
   \qquad \text{and} \qquad
   \Th = \Phi_h(\Thc)
   ,
\]
then the new physical mesh is given by
\[
   \widetilde{\Th} = \Phi_h(\ThcO),
\]
where $\ThcO$ is a given reference mesh of $\Omega_c$.
Typically, $\ThcO$ should be chosen as uniform as possible but this is not necessary, although the non-uniformity of $\ThcO$ will affect the resulting physical mesh (see \cref{ex:smoothing,fig:smoothing}). 
Moreover, $\ThcO$ does not have to have the same number of vertices, elements, or the same connectivity as $\Th$.
Hence, a two-level strategy can be used to improve the efficiency: a coarser mesh for the mesh equation and a finer mesh ---obtained via linear interpolation--- for the physical equation (e.g,\ see \acite{Hua01}).

\subsection{Preservation of~coercivity and~convexity}
\label{sect:preservation}

In \cref{sect:mmpde} we have mentioned that both Winslow's and Huang's functionals are coercive and convex/polyconvex which guarantee the existence of minimizers.
In the following, we show that these properties are preserved by the discretization discussed in the preceding subsections.
To this end, we state a lemma which generalizes \cref{cor-1} and whose proof is straightforward.

\begin{lemma}
\label{lem:tr}
If $A$, $B$, $C$ are square matrices of the same size and $A$ and $C$ are independent of $B$, then
\begin{equation}
\p{\tr(ABC)}{B} = C A .
\label{eq:diff:10}
\end{equation}
\end{lemma}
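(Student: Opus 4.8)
The plan is to mirror the proof of \cref{cor-1}: regard $f(B)=\tr(ABC)$ as a scalar function of the matrix $B$ and combine the cyclic invariance of the trace (\cref{lem:3.1}) with the chain rule \cref{chain-rule-1}. Let $t$ denote an arbitrary entry of $B$. Since $A$ and $C$ are independent of $B$, differentiating through the product gives $\p{\tr(ABC)}{t} = \tr\!\left(A \p{B}{t} C\right)$, and the cyclic property then rearranges this into $\tr\!\left(CA \p{B}{t}\right)$. On the other hand, the chain rule \cref{chain-rule-1} expresses the very same quantity as $\tr\!\left(\p{f}{B}\p{B}{t}\right)$. Equating the two yields $\tr\!\left(\p{f}{B}\p{B}{t}\right) = \tr\!\left(CA\p{B}{t}\right)$ for every entry $t$ of $B$.

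To pass from this scalar identity to the matrix identity, I would let $t$ run over the entries $B_{k\ell}$ individually, so that $\p{B}{t} = \V{e}_k \V{e}_\ell^T$ is the matrix with a single unit entry in position $(k,\ell)$ (here $\V{e}_k$ is the $k$-th standard basis vector). Using $\tr\!\left(M\, \V{e}_k \V{e}_\ell^T\right) = M_{\ell k}$, the identity above collapses to ${\left(\p{f}{B}\right)}_{\ell k} = {(CA)}_{\ell k}$ for all $k,\ell$, which is precisely $\p{f}{B} = CA$.

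Because the statement is so short, I would also record the purely entrywise verification, which is arguably the most transparent route. Writing $\tr(ABC) = \sum_{p,q,r} A_{pq} B_{qr} C_{rp}$ and differentiating with respect to $B_{ab}$ collapses the sums through the Kronecker deltas to $\sum_p A_{pa} C_{bp} = {(CA)}_{ba}$. Since the definition \cref{def-1} places $\p{f}{B_{ji}}$ into entry $(i,j)$, this reads ${\left(\p{f}{B}\right)}_{ij} = {(CA)}_{ij}$, again giving \cref{eq:diff:10}.

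The computation involves no real difficulty; the one point that genuinely needs care is the transpose convention built into \cref{def-1}, namely ${\left(\p{f}{B}\right)}_{ij} = \p{f}{B_{ji}}$. It is exactly this convention that produces the factor order $CA$ rather than $AC$ or a transpose, and it is the single place where I would keep the index bookkeeping explicit. With this ordering in hand, \cref{lem:tr} provides the general triple-product tool of which the special identities in \cref{cor-1} are instances, so I expect the entire argument to occupy only a few lines.
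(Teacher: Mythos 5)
Your proof is correct, and it follows exactly the route the paper intends: the paper omits the argument as ``straightforward'' because \cref{lem:tr} generalizes \cref{cor-1}, whose proof is precisely the chain-rule-plus-cyclic-trace computation you carry out, with the transpose convention of \cref{def-1} accounting for the factor order $CA$. Your supplementary entrywise verification is a harmless (and correct) bonus.
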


We first consider Winslow's functional.
In this case, from \cref{eq:fun:winslow,eq:FK:1} the discretized functional can be expressed as
\begin{equation}
   I_h = \sum_{K} \abs{K} I_K(\hat{E}_K), \quad I_K(\hat{E}_K) = \tr(\hat{E}_K S \hat{E}_K^T)
   ,
   \label{eq:winslow:3}
\end{equation}
where $S = E_K^{-1} \M^{-1}(\V{x}_K) E_K^{-T}$. 
Recall that $\M$ is assumed to be symmetric and uniformly positive definite.
If we assume that the (current) physical mesh $\Th$ has no degenerate elements, then $E_K$ is nonsingular and $S$ is uniformly (over all elements) positive definite.
Hence, there exists a positive constant $\alpha$ (independent of $K$) such that
\begin{equation}
   I_K(\hat{E}_K) \ge \alpha \tr(\hat{E}_K \hat{E}_K^T) \quad \forall K \in \Th
   .
   \label{eq:winslow:4}
\end{equation}
The above inequality is a discrete analogue of the coercivity condition for continuous functionals (cf.~\cite[(6.52)]{HR11}).
Moreover, for any $K\in \Th$ and any edge matrix
$E_{\V{\eta}} = [\V{\eta}_{1}^K-\V{\eta}_{0}^K, \dotsc, \V{\eta}_{d}^K-\V{\eta}_{0}^K]$, 
\[
   \tr\left(\p{I_K}{\hat{E}_K} E_{\V{\eta}}\right) 
      = \tr\left(2 S \hat{E}_K^T E_{\V{\eta}}\right)
      =  2 \tr\left(E_{\V{\eta}}^T \hat{E}_K S \right)
   .
\]
From this,
\begin{align*}
   \tr\left( \p{\tr\left(\p{I_K}{\hat{E}_K} E_{\V{\eta}}\right)}{\hat{E}_K} E_{\V{\eta}}\right) 
   &= 2 \tr\left( \p{\tr\left(E_{\V{\eta}}^T \hat{E}_K S \right)}{\hat{E}_K}   E_{\V{\eta}}\right) 
   \\ 
   &= 2 \tr\left( S  E_{\V{\eta}}^T  E_{\V{\eta}}\right) 
   = 2 \tr\left( E_{\V{\eta}} S  E_{\V{\eta}}^T  \right)  \ge 0
   ,
\end{align*}
which is a discrete analogue of the convexity condition for continuous functionals (cf.~\cite[(6.53)]{HR11}).
Thus, $I_h$ preserves the coercivity and convexity of Winslow's functional.
It is noted that the latter property implies that $I_h$ is a convex function of $\V{\xi}_{1}, \dotsc, \V{\xi}_{N_v}$.
\emph{%
Hence, $I_h$, with or without suitable boundary conditions, has a unique minimizer.%
}

For Huang's functional \cref{eq:fun:huang}, $I_K$ takes the form
\begin{align}
   I_K & = \theta \det{(\M(\V{x}_K))}^{\frac{1}{2}} {(\tr(\hat{E}_K S \hat{E}_K^T))}^{\frac{d p}{2}}
   \notag \\
   & + (1-2\theta) d^{\frac{d p}{2}} \det{(\M(\V{x}_K))}^{\frac{1-p}{2}} \det{(E_K)}^{-p} \det{(\hat{E}_K)}^p .
   \label{eq:huang:3}
\end{align}
As for Winslow's functional, we can show that $I_K$ preserves the coercivity and polyconvexity of the continuous functional for $0 < \theta \le \frac{1}{2}$, $dp \ge 2$,  and $p \ge 1$.
Particularly, $I_K$ is polyconvex in the sense that it is convex when considered as a function of $\hat{E}_K$ and $\det(\hat{E}_K)$.
Note that $I_K$ is not convex in general when considered as a function of $\hat{E}_K$.
In the continuous situation, coercivity and polyconvexity imply the existence of minimizers of the functional (e.g.,\ see~\cite{Evans1998}).
However, it is unclear to the authors if this is true in the discrete situation.

\section{Numerical examples}
\label{sect:numerics}

In this section we present examples to demonstrate the direct discretization and solution strategy discussed in the previous section.
Unless otherwise stated, we use Huang's functional \cref{eq:fun:huang} with $\theta = \frac{1}{3}$ and $p=2$ as the meshing functional.
Moreover, the mesh equation with $\tau = 0.1$ is integrated from $t = 0$ to $t=1$ using Matlab ODE solver \emph{ode15s}.

\begin{example}[2D, mesh smoothing]
\label{ex:smoothing}
In the first example we demonstrate how our method can be used for mesh smoothing.
To this end, we choose a mesh on $(0,1) \times (0,1)$ (\cref{fig:smoothingU:regular} or \cref{fig:smoothingR:regular}) as the reference computational mesh $\ThcO$ and the initial computational mesh $\Thc$.
The initial physical mesh $\Th$ is obtained by randomly perturbing the coordinates of the interior vertices of $\Thc$ (see \cref{fig:smoothingU:perturbed,fig:smoothingR:perturbed}).
The mesh is smoothed by integrating the mesh equation with $\M = I$.
The final meshes obtained at $t=1$ are shown in \cref{fig:smoothingU:smoothed,fig:smoothingR:smoothed}.
One can see that they are very smooth and almost identical to $\ThcO$.
The latter is due to the fact that the optimal coordinate transformation for this example ($\M=I$ and $\Omega = \Omega_c$) is $\V{x}(\V{\xi}) = \V{\xi}$ and therefore the final mesh is identical to the reference computational mesh $\ThcO$.

\begin{figure}[p]
   \begin{subfigure}[t]{0.31\linewidth}
      \includegraphics[clip, width=1.0\linewidth]{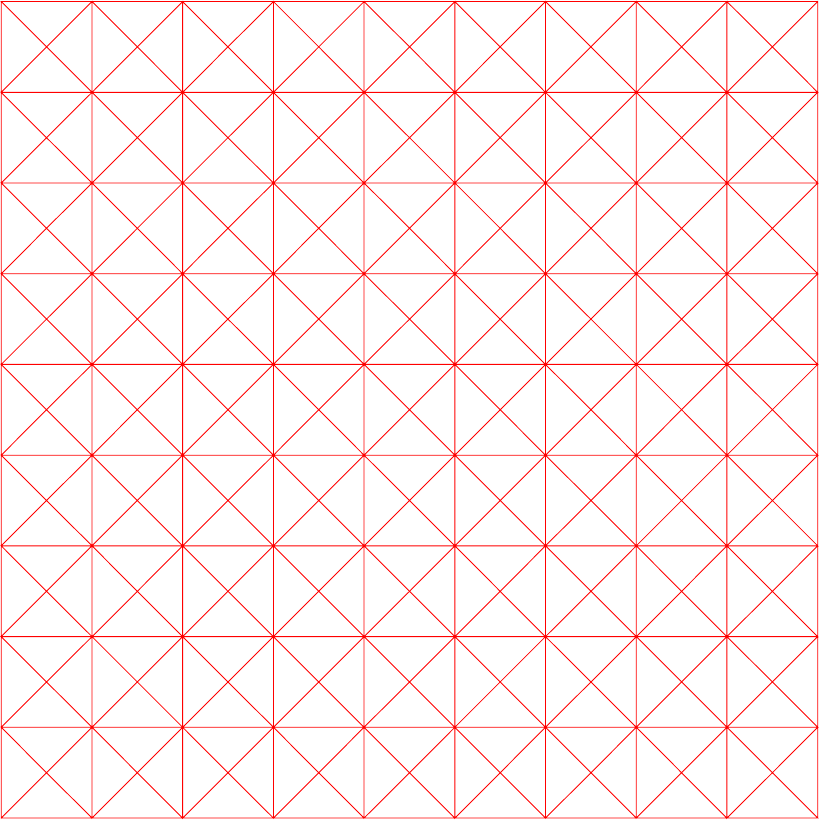}%
      \caption{uniform $\ThcO$}\label{fig:smoothingU:regular}
   \end{subfigure}%
   \hfill%
   \begin{subfigure}[t]{0.31\linewidth}
      \includegraphics[clip, width=1.0\linewidth]{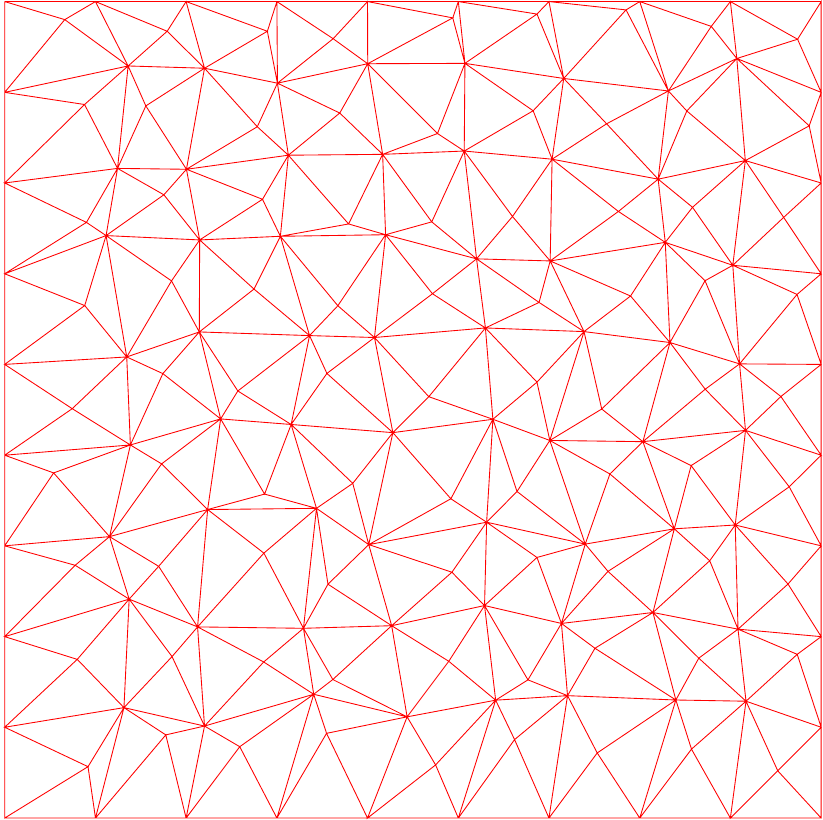}%
      \caption{initial, perturbed physical mesh}\label{fig:smoothingU:perturbed}
   \end{subfigure}
   \hfill%
   \begin{subfigure}[t]{0.31\linewidth}
      \includegraphics[clip, width=1.0\linewidth]{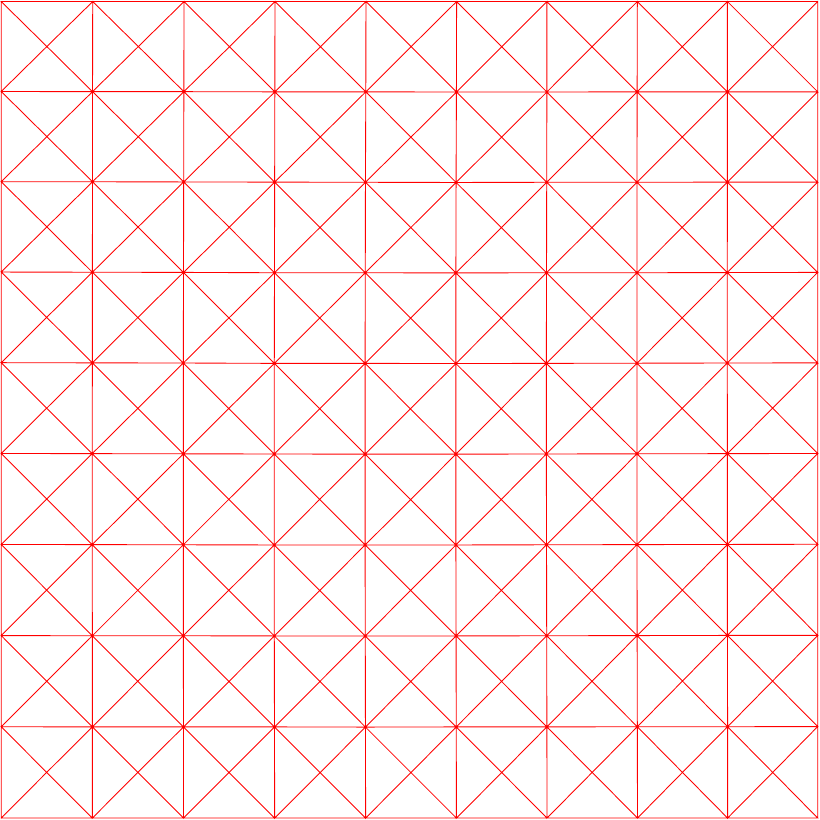}%
      \caption{final, smoothed physical mesh}\label{fig:smoothingU:smoothed}
   \end{subfigure}%
   \\[1ex]
   \begin{subfigure}[t]{0.31\linewidth}
      \includegraphics[clip, width=1.0\linewidth]{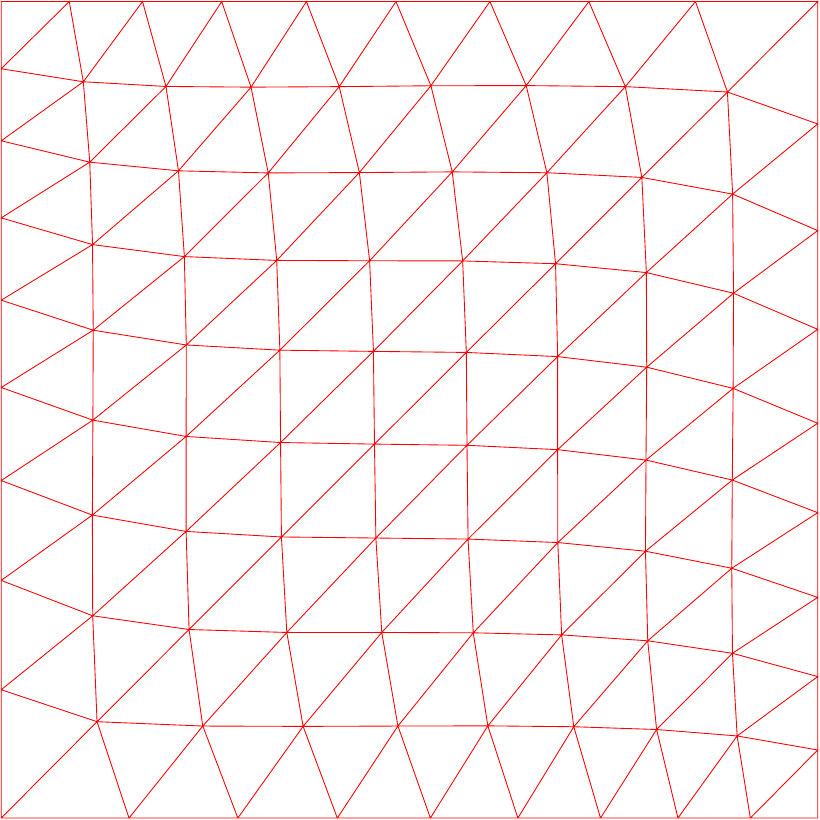}%
      \caption{another $\ThcO$}\label{fig:smoothingR:regular}
   \end{subfigure}%
   \hfill%
   \begin{subfigure}[t]{0.31\linewidth}
      \includegraphics[clip, width=1.0\linewidth]{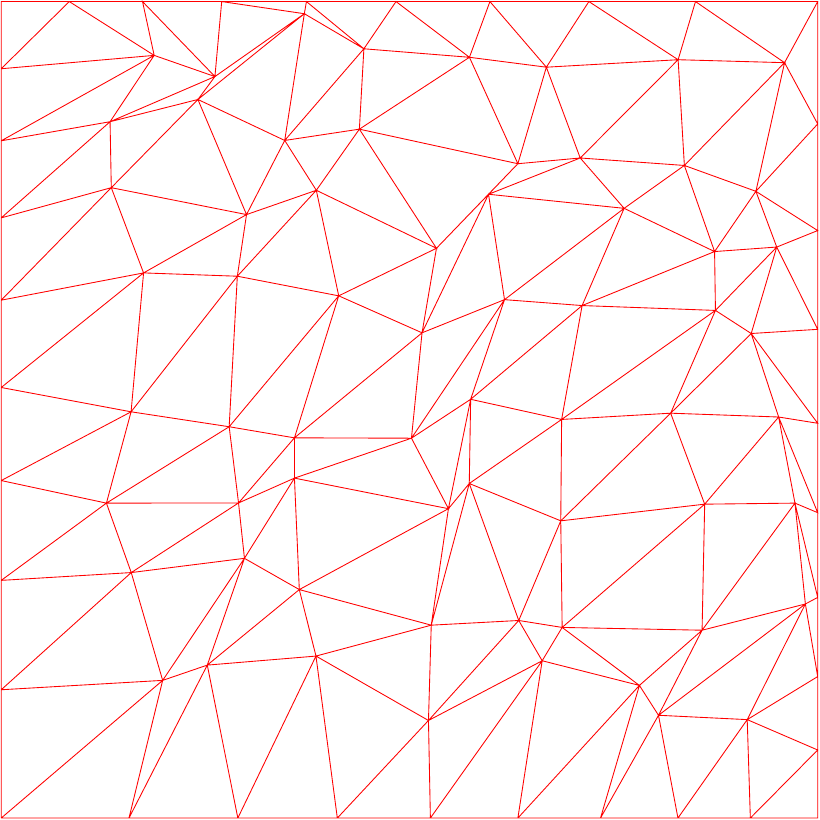}%
      \caption{initial, perturbed physical mesh}\label{fig:smoothingR:perturbed}
   \end{subfigure}%
   \hfill%
   \begin{subfigure}[t]{0.31\linewidth}
      \includegraphics[clip, width=1.0\linewidth]{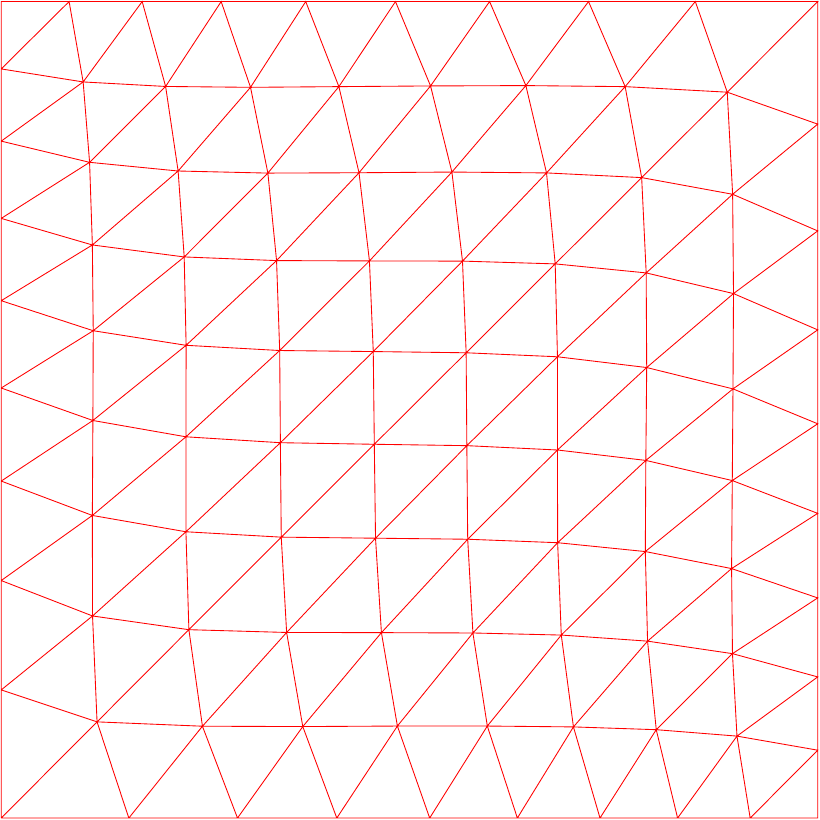}%
      \caption{final, smoothed physical mesh}\label{fig:smoothingR:smoothed}
   \end{subfigure}
   \caption{Mesh examples for mesh smoothing in \cref{ex:smoothing}\label{fig:smoothing}}
\end{figure}
\end{example}

\begin{example}[2D, sine wave]
\label{ex:2d:sin}
In this example we generate an adaptive mesh to minimize the $L^2$ interpolation error bound for
\[
   u(\V{x}) = \tanh \left(
      -30 \left[ y - \frac{1}{2} - \frac{1}{4} \sin\left(2 \pi x\right) \right] \right)
\]
in the domain $\Omega = (0,1) \times (0,1)$ (see~\cite[Sect.~3.2]{Hua02b} on the choice of $\M$).
\Cref{fig:2d:sin} shows the adaptive mesh and close-ins at the tip of the sine wave and in the middle of the domain.

\begin{figure}[p]
   \begin{subfigure}[t]{0.31\linewidth}
      \includegraphics[clip, width=1.0\linewidth]{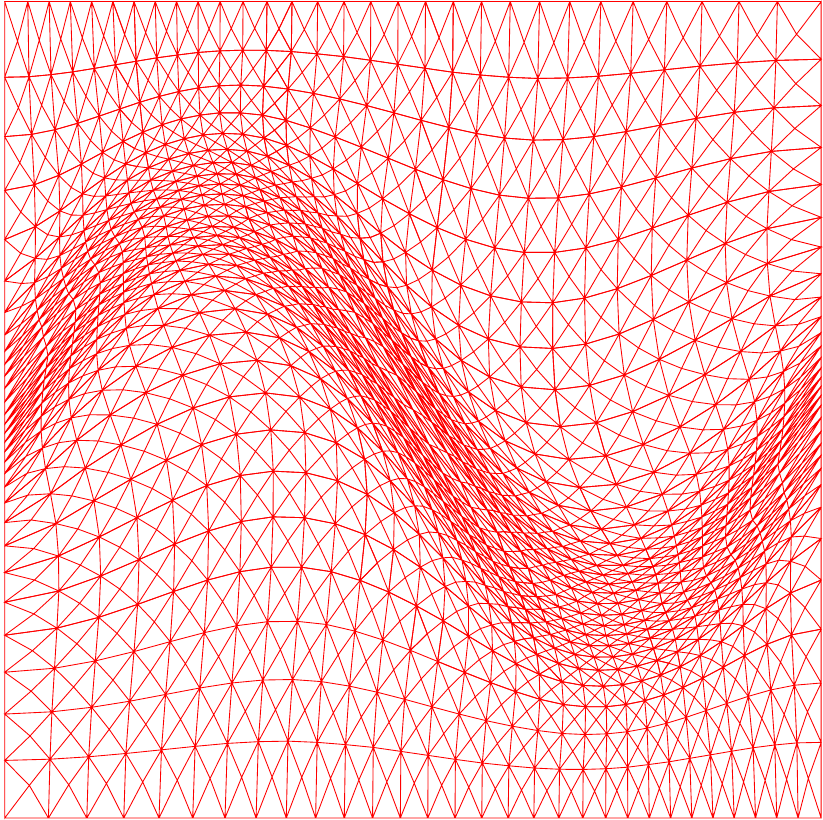}%
      \caption{full domain}
   \end{subfigure}%
   \hfill%
   \begin{subfigure}[t]{0.31\linewidth}
      \includegraphics[clip, width=1.0\linewidth]{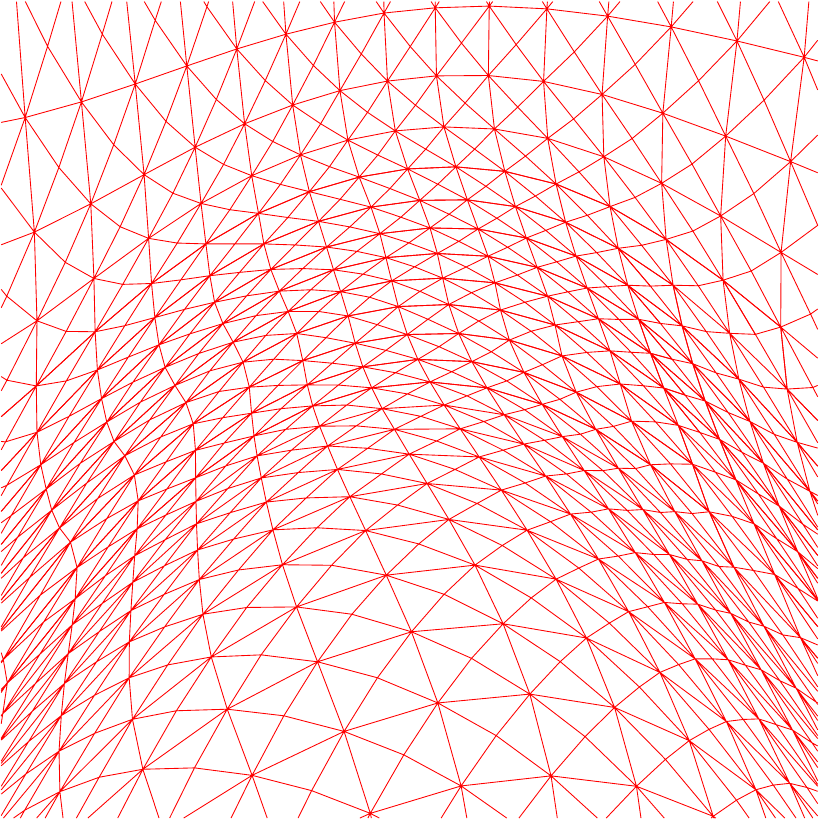}%
      \caption{zoom at the wave tip}
   \end{subfigure}%
   \hfill%
   \begin{subfigure}[t]{0.31\linewidth}
      \includegraphics[clip, width=1.0\linewidth]{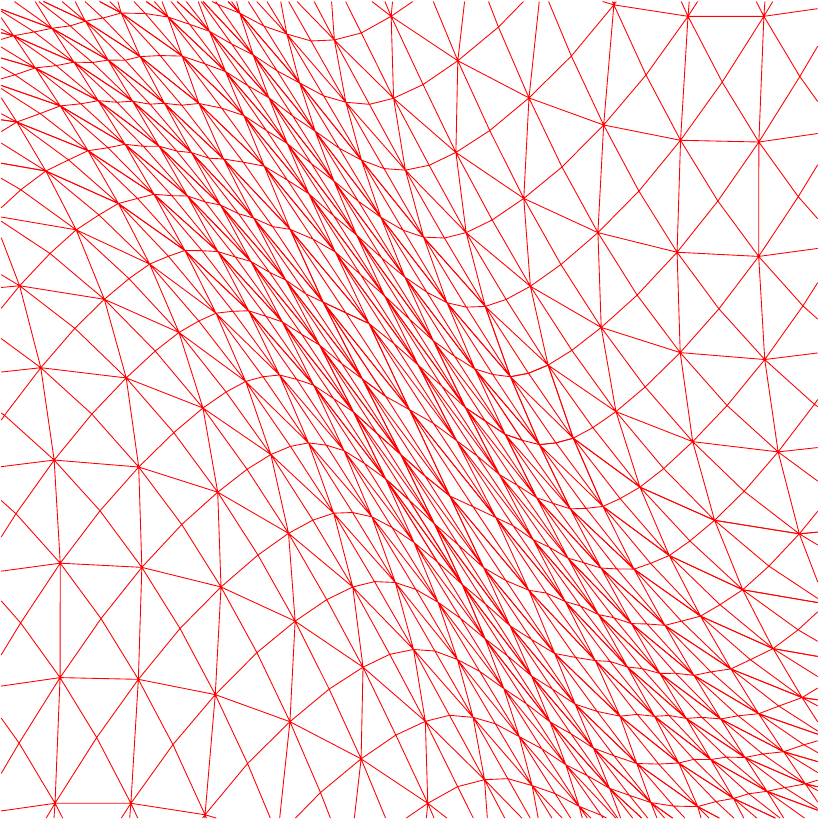}%
      \caption{zoom in the middle}
   \end{subfigure}
   \caption{%
      A $31 \times 31$ adaptive mesh for \cref{ex:2d:sin}
   }\label{fig:2d:sin}
\end{figure}

\end{example}

\begin{example}[2D, horseshoe domain]
\label{ex:horse}

In this example we have different computational and physical domains:$\Omega_c = (0,1) \times (0,1)$ and $\Omega$ is given through the parametrization  
\[
   \V{x} = \begin{pmatrix} 
      - \left(1+\eta \right) \cos\left(\pi \xi \right)\\
        \left[1 + \left(2R - 1\right) \eta \right] \sin\left(\pi\xi \right)
      \end{pmatrix}
      \quad \text{with} \quad
      R = 4.5.
\]
As metric tensors we consider $\M = I$ and
\begin{equation}
   \M(\V{x}) = 1 + {\left( x_1^2 + \sqrt{ {(x_2 - 2 R)}^2 + \num{1e-08} } \right)}^{-1}
   .
   \label{eq:horse:M}
\end{equation}
\Cref{fig:horse:huang} shows initial and adaptive physical meshes with $\M = I$ and $\M$ from \cref{eq:horse:M} for the uniform $21 \times 21$ criss-cross computational mesh of $\Omega_c$.

For this example, it is known that an improper discretization of Winslow's functional can produce folded meshes~\cite{KL95b}.
In \cref{fig:horse:I:winslow,fig:horse:I:huang} we show mesh examples for Winslow's and Huang's functionals with $\M=I$ and different uniform computational meshes ($3 \times 3$, $5 \times 5$ and $9 \times 9$).
Notice that there is no mesh folding even on the coarsest meshes whereas other discretization methods can lead to mesh folding even on much finer meshes (cf.~\cite[Table~1]{KL95b}).

If the highly adaptive $\M$ from \cref{eq:horse:M} is used, the situation becomes different and
mesh folding does occur on a coarse mesh level.
For Huang's functional the critical mesh size is about $17 \times 17$ (\cref{fig:horse:A:huang}), whereas the critical mesh size for Winslow's functional is about $125 \times 125$ (\cref{fig:horse:A:winslow}).

\begin{figure}[t]
   \begin{subfigure}[t]{0.31\linewidth}
      \includegraphics[clip, width=1.0\linewidth]{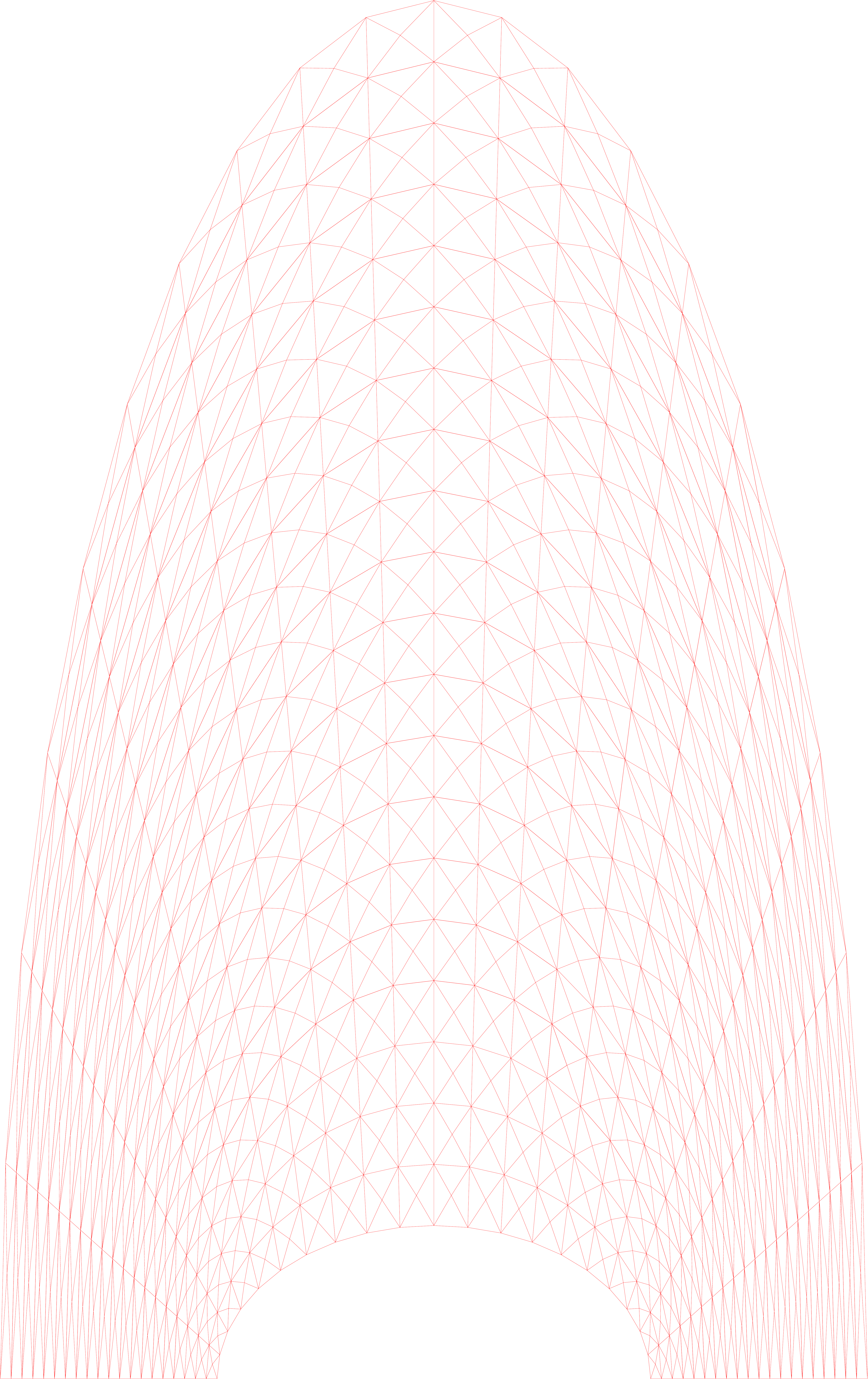}%
      \caption{initial $21 \times 21$ mesh}
   \end{subfigure}%
   \hfill%
   \begin{subfigure}[t]{0.31\linewidth}
      \includegraphics[clip, width=1.0\linewidth]{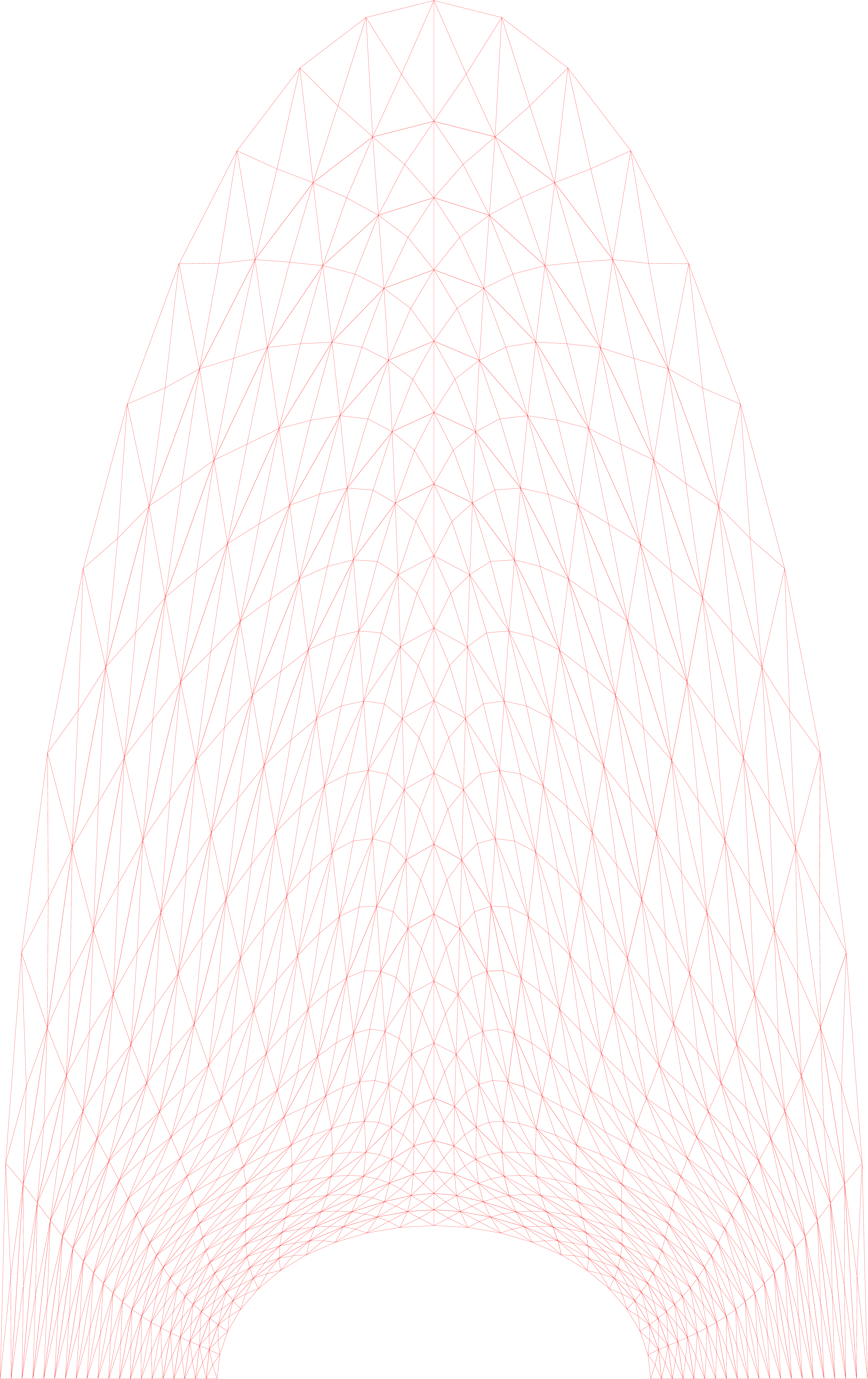}%
      \caption{$\M = I$}
   \end{subfigure}%
   \hfill%
   \begin{subfigure}[t]{0.31\linewidth}
      \includegraphics[clip, width=1.0\linewidth]{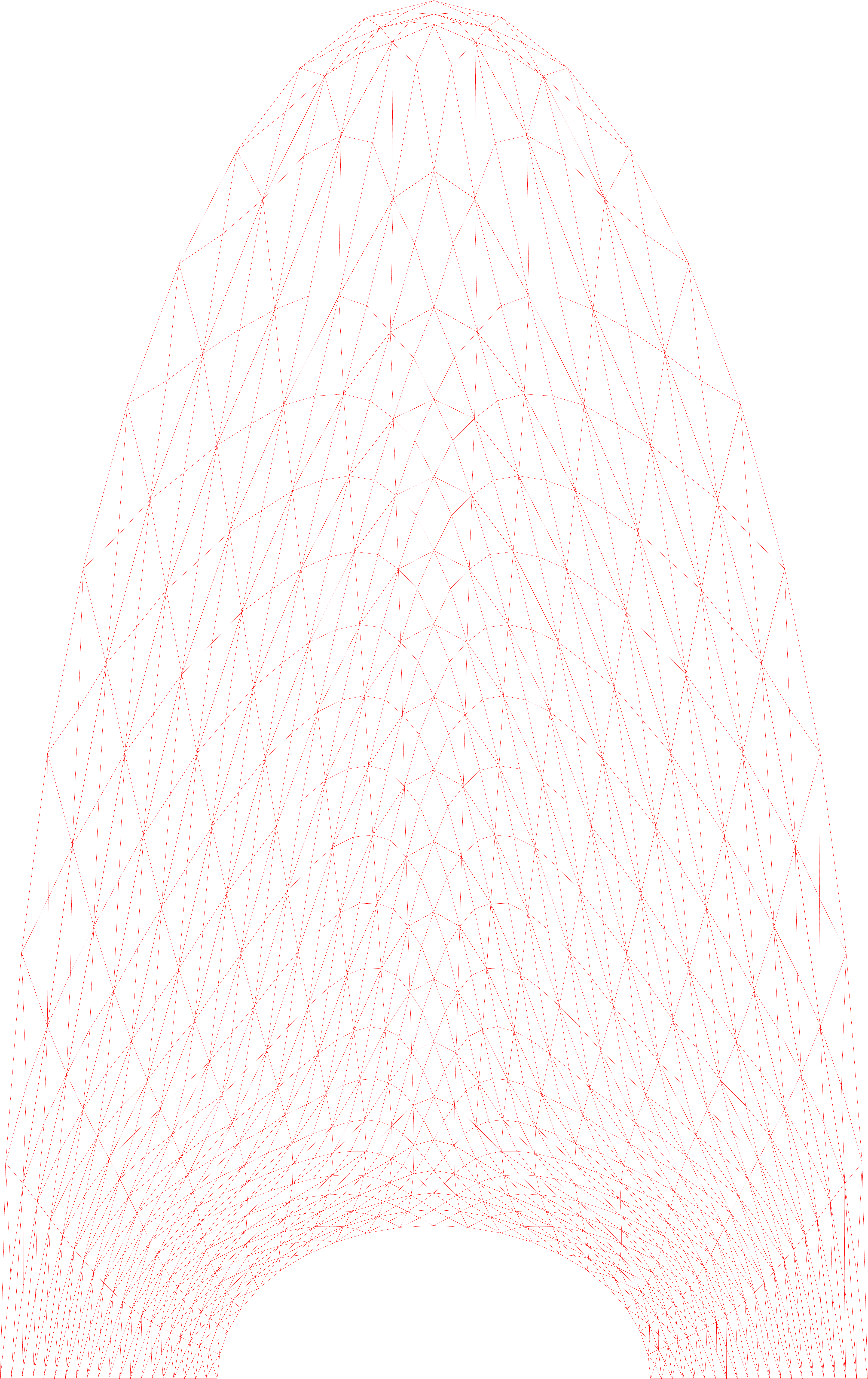}%
      \caption{adaptive $\M$ from \cref{eq:horse:M}}
   \end{subfigure}
   \caption{%
      Mesh examples for \cref{ex:horse}
   }\label{fig:horse:huang}
\end{figure}

\begin{figure}[p]
   \begin{subfigure}[t]{0.31\linewidth}
      \includegraphics[clip, width=1.0\linewidth]{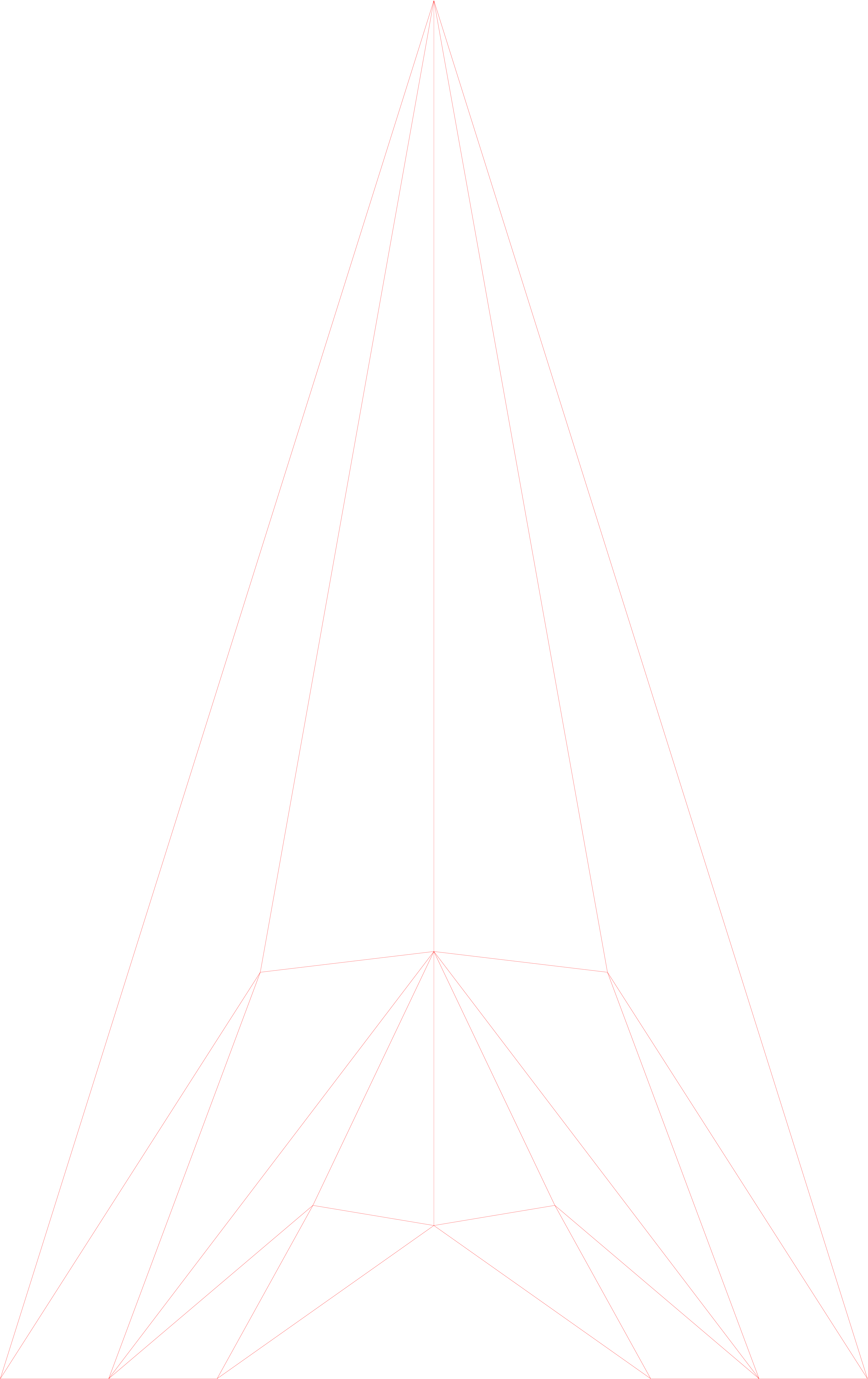}%
      \caption{$3 \times 3$}
   \end{subfigure}%
   \hfill%
   \begin{subfigure}[t]{0.31\linewidth}
      \includegraphics[clip, width=1.0\linewidth]{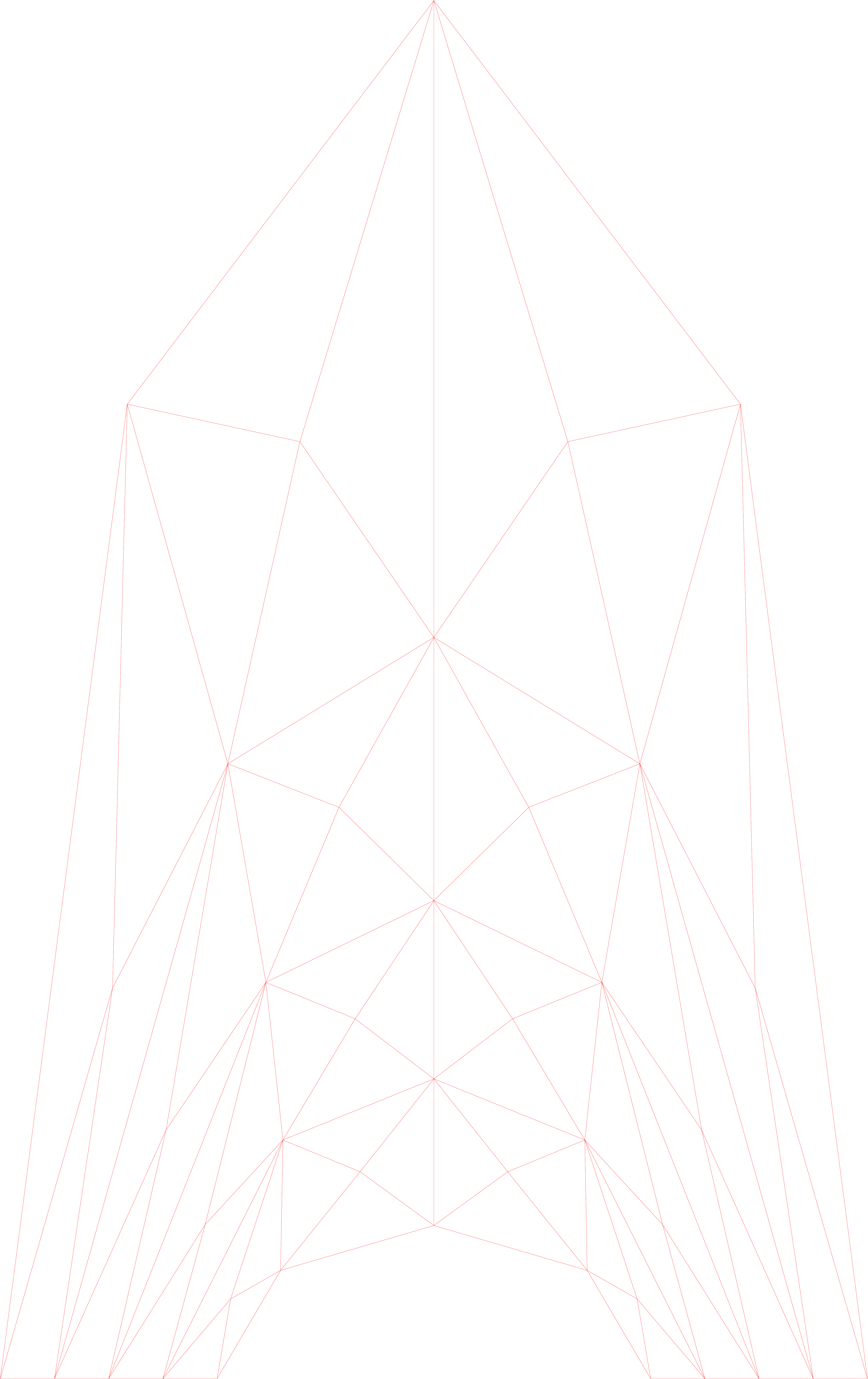}%
      \caption{$5 \times 5$}
   \end{subfigure}%
   \hfill%
   \begin{subfigure}[t]{0.31\linewidth}
      \includegraphics[clip, width=1.0\linewidth]{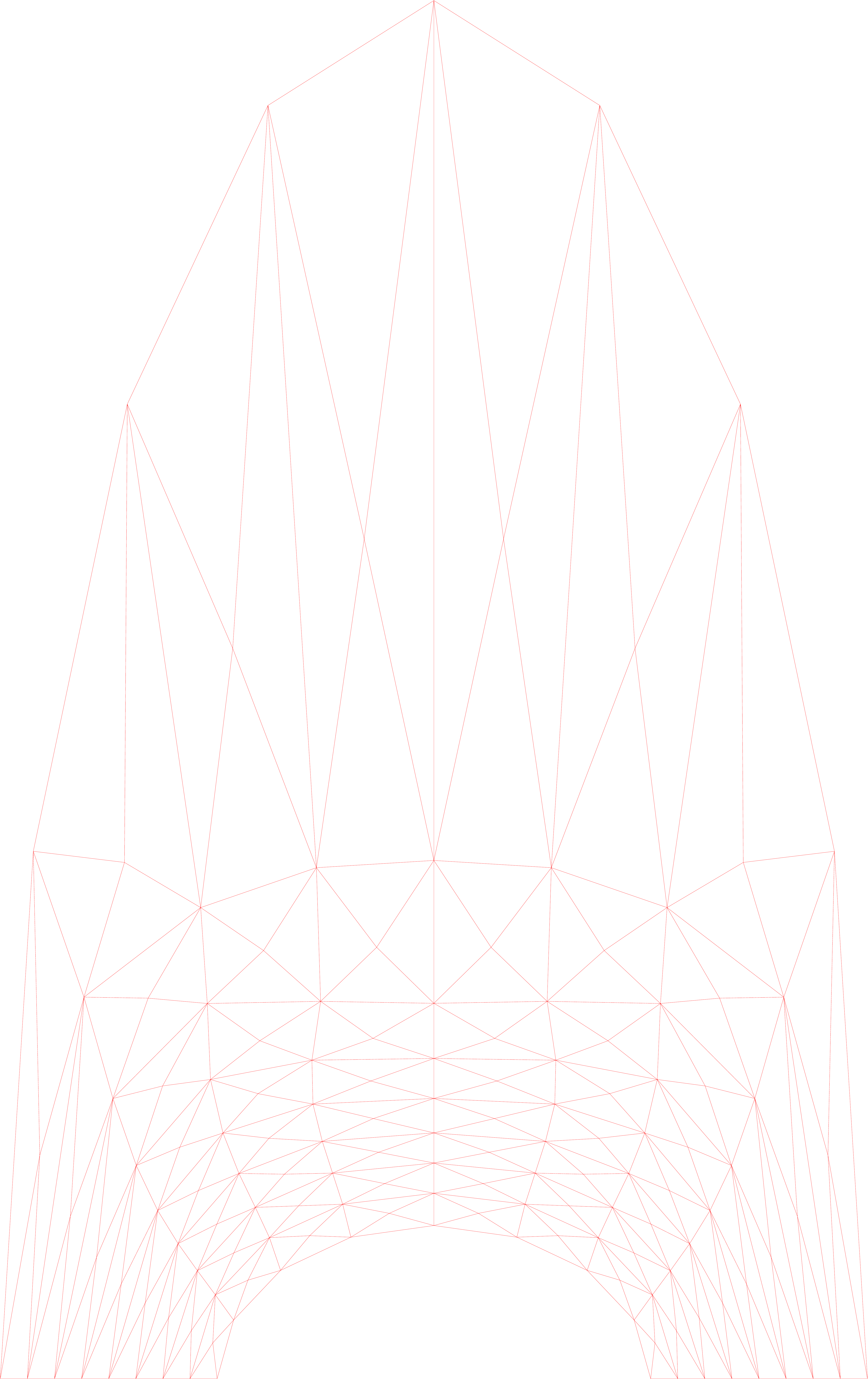}%
      \caption{$9 \times 9$}
   \end{subfigure}
   \caption{%
      \cref{ex:horse},
      $\M = I$,
      Huang's functional \cref{eq:fun:huang}
   }\label{fig:horse:I:huang}
\end{figure}

\begin{figure}[p]
   \begin{subfigure}[t]{0.31\linewidth}
      \includegraphics[clip, width=1.0\linewidth]{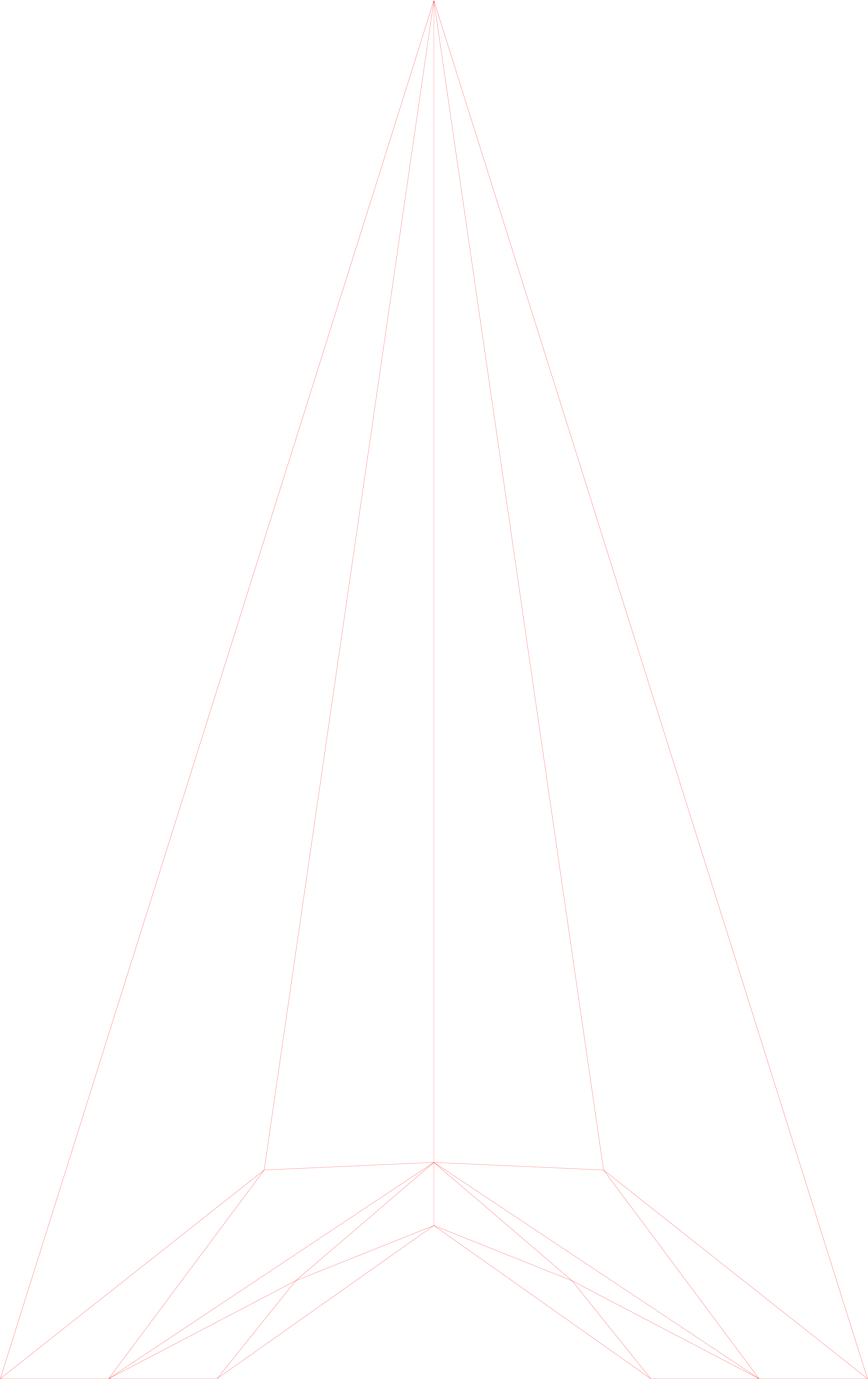}%
      \caption{$3 \times 3$}
   \end{subfigure}%
   \hfill%
   \begin{subfigure}[t]{0.31\linewidth}
      \includegraphics[clip, width=1.0\linewidth]{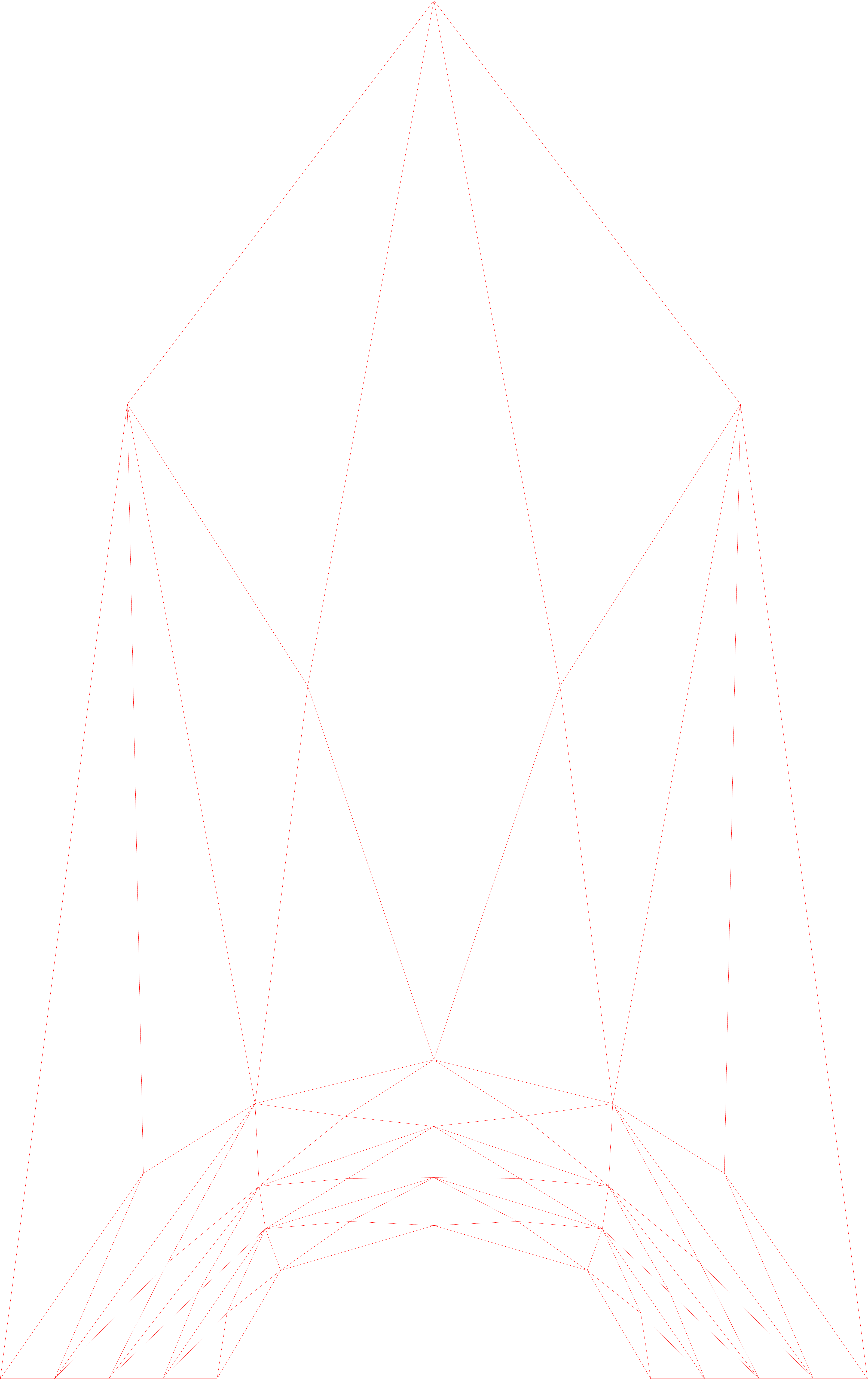}%
      \caption{$5 \times 5$}
   \end{subfigure}%
   \hfill%
   \begin{subfigure}[t]{0.31\linewidth}
      \includegraphics[clip, width=1.0\linewidth]{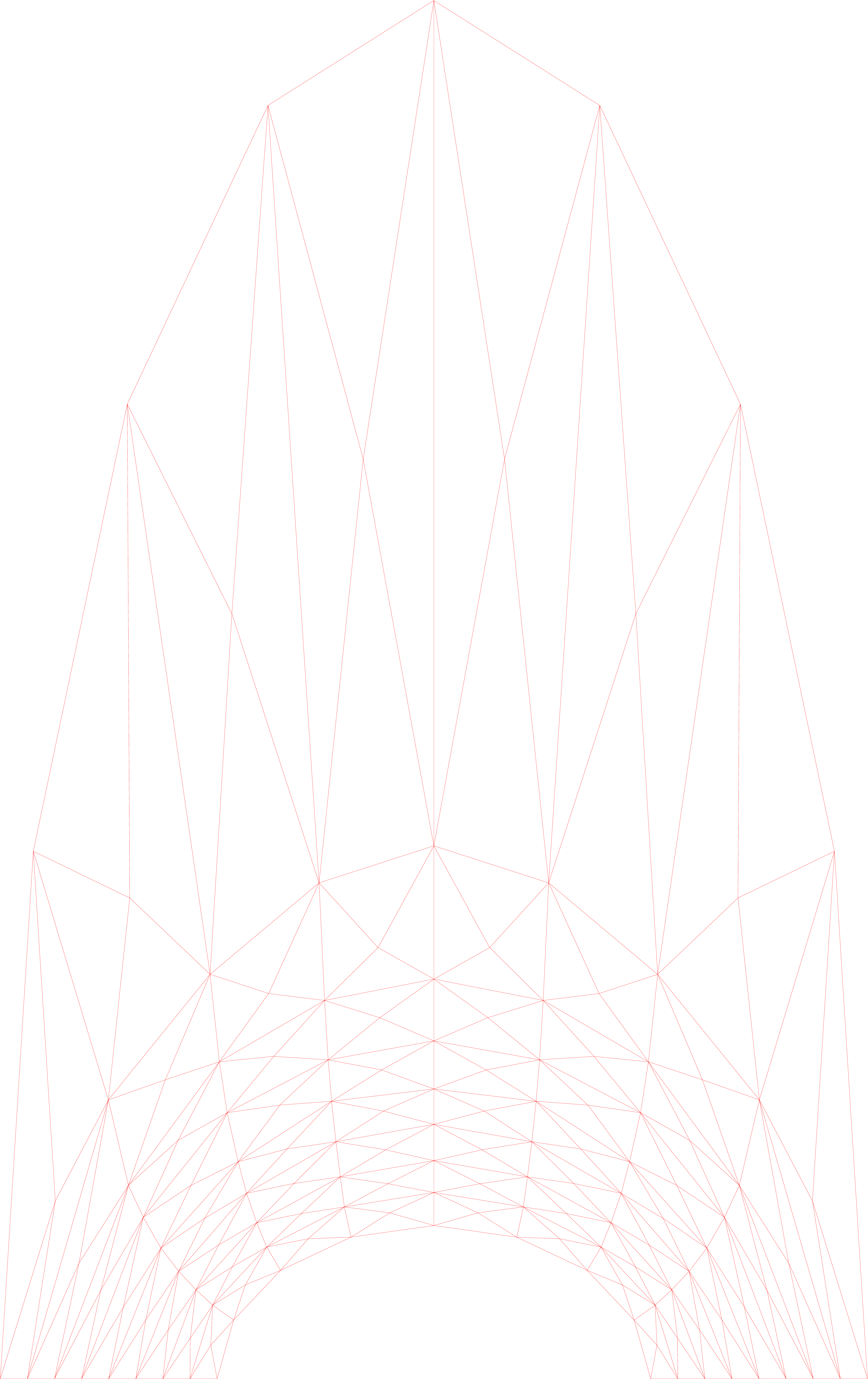}%
      \caption{$9 \times 9$}
   \end{subfigure}
   \caption{%
      \Cref{ex:horse},
      $\M = I$,
      Winslow's functional \cref{eq:fun:winslow}
   }\label{fig:horse:I:winslow}
\end{figure}

\begin{figure}[p]
   \begin{subfigure}[t]{0.31\linewidth}
      \includegraphics[clip, width=1.0\linewidth]{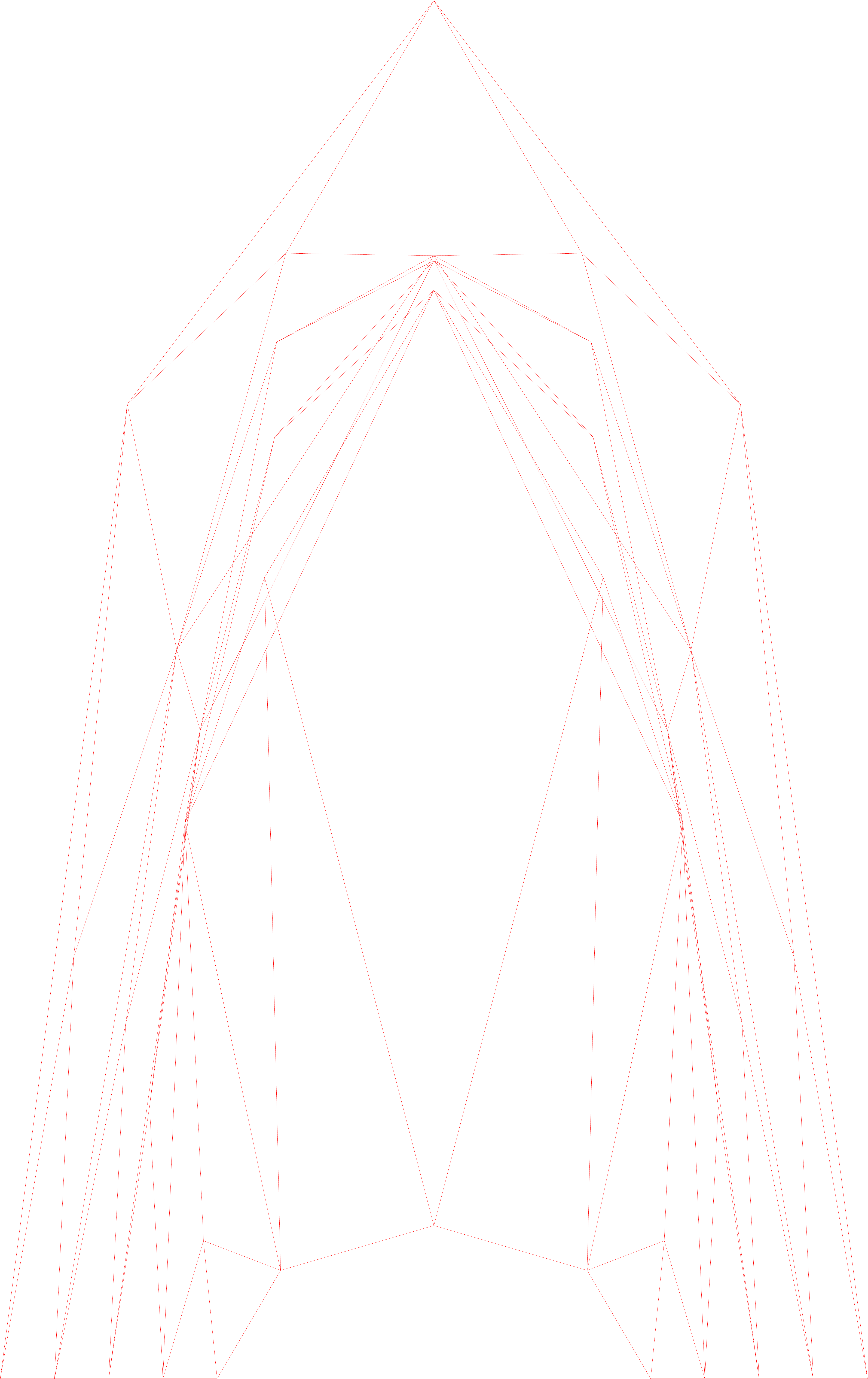}%
      \caption{$5 \times 5$}
   \end{subfigure}%
   \hfill%
   \begin{subfigure}[t]{0.31\linewidth}
      \includegraphics[clip, width=1.0\linewidth]{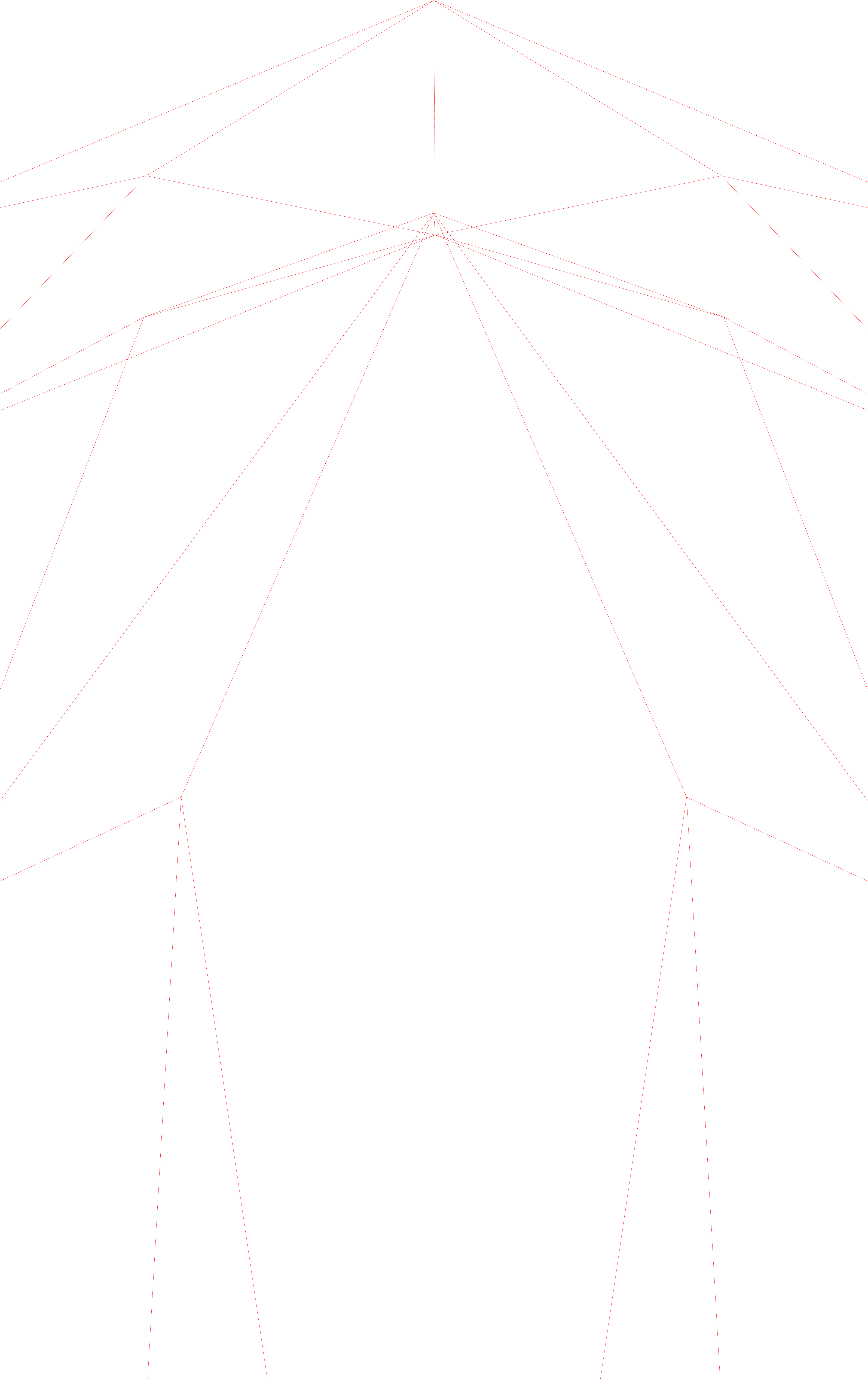}%
      \caption{$13 \times 13$ (zoom at the tip)}
   \end{subfigure}%
   \hfill%
   \begin{subfigure}[t]{0.31\linewidth}
      \includegraphics[clip, width=1.0\linewidth]{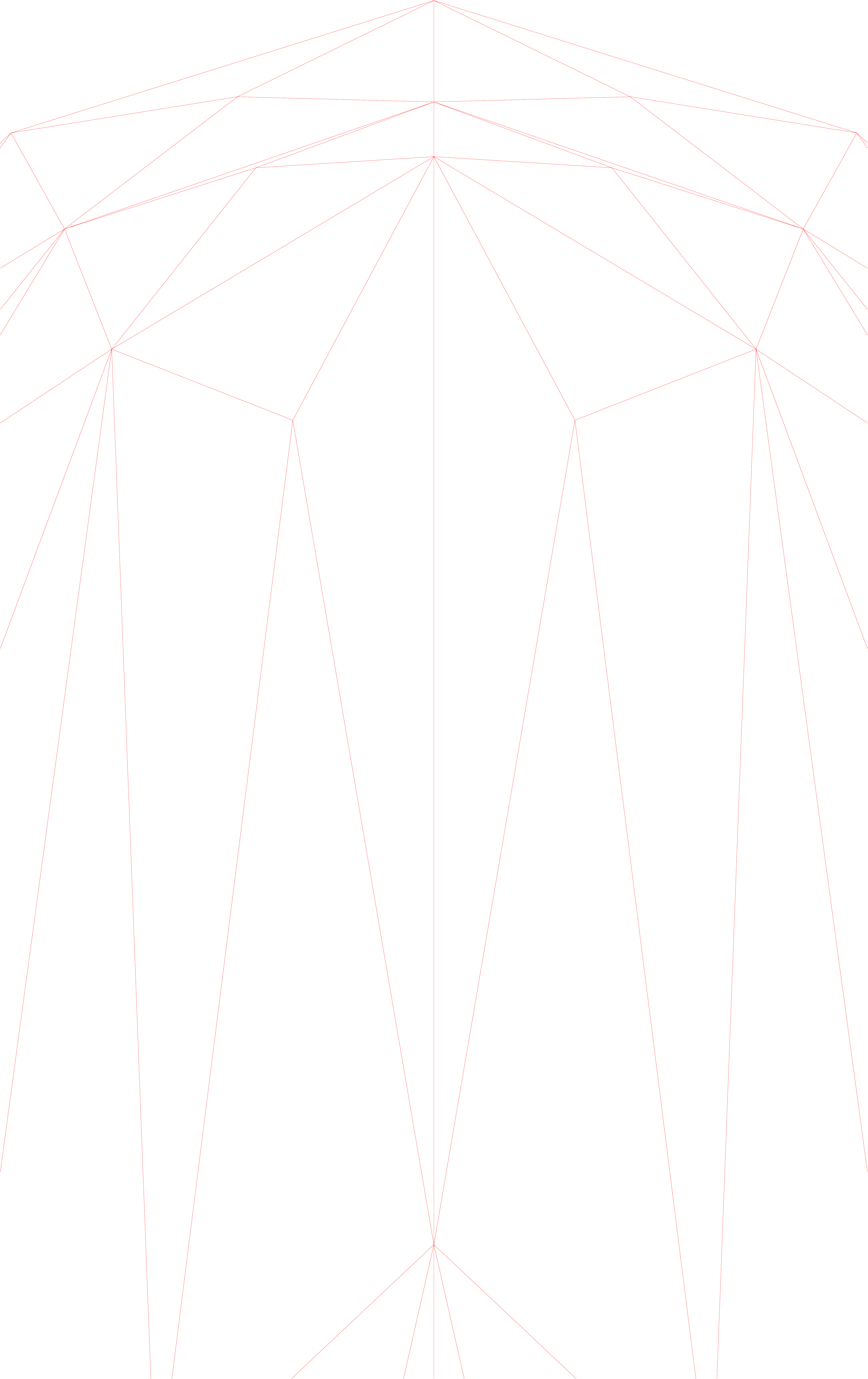}%
      \caption{$17 \times 17$ (zoom at the tip)}
   \end{subfigure}
   \caption{%
      \cref{ex:horse},
      adaptive $\M$ from \cref{eq:horse:M},
      Huang's functional \cref{eq:fun:huang}
   }\label{fig:horse:A:huang}
\end{figure}

\begin{figure}[p]
   \begin{subfigure}[t]{0.31\linewidth}
      \includegraphics[clip, width=1.0\linewidth]{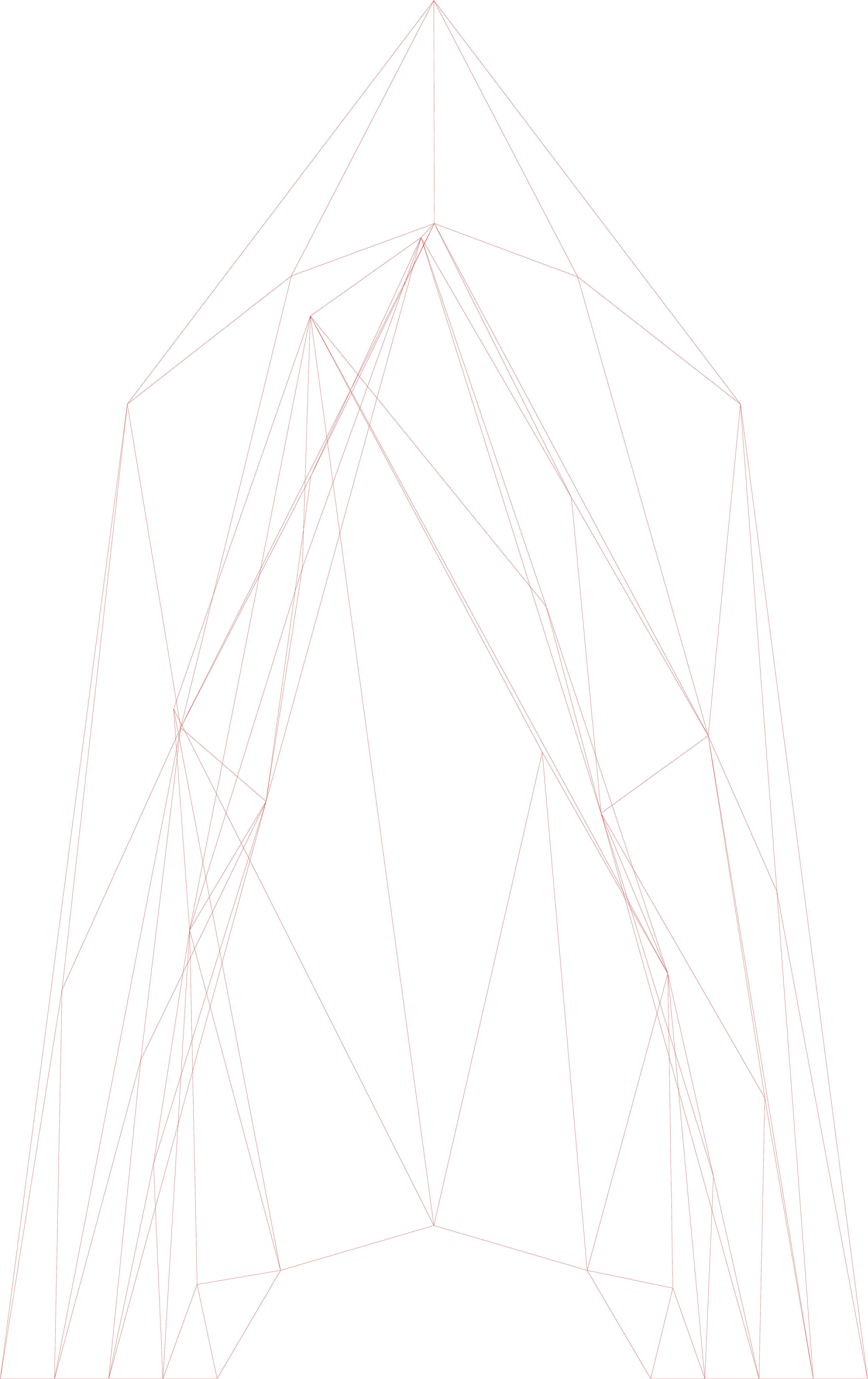}%
      \caption{$5 \times 5$}
   \end{subfigure}%
   \hfill%
   \begin{subfigure}[t]{0.31\linewidth}
      \includegraphics[clip, width=1.0\linewidth]{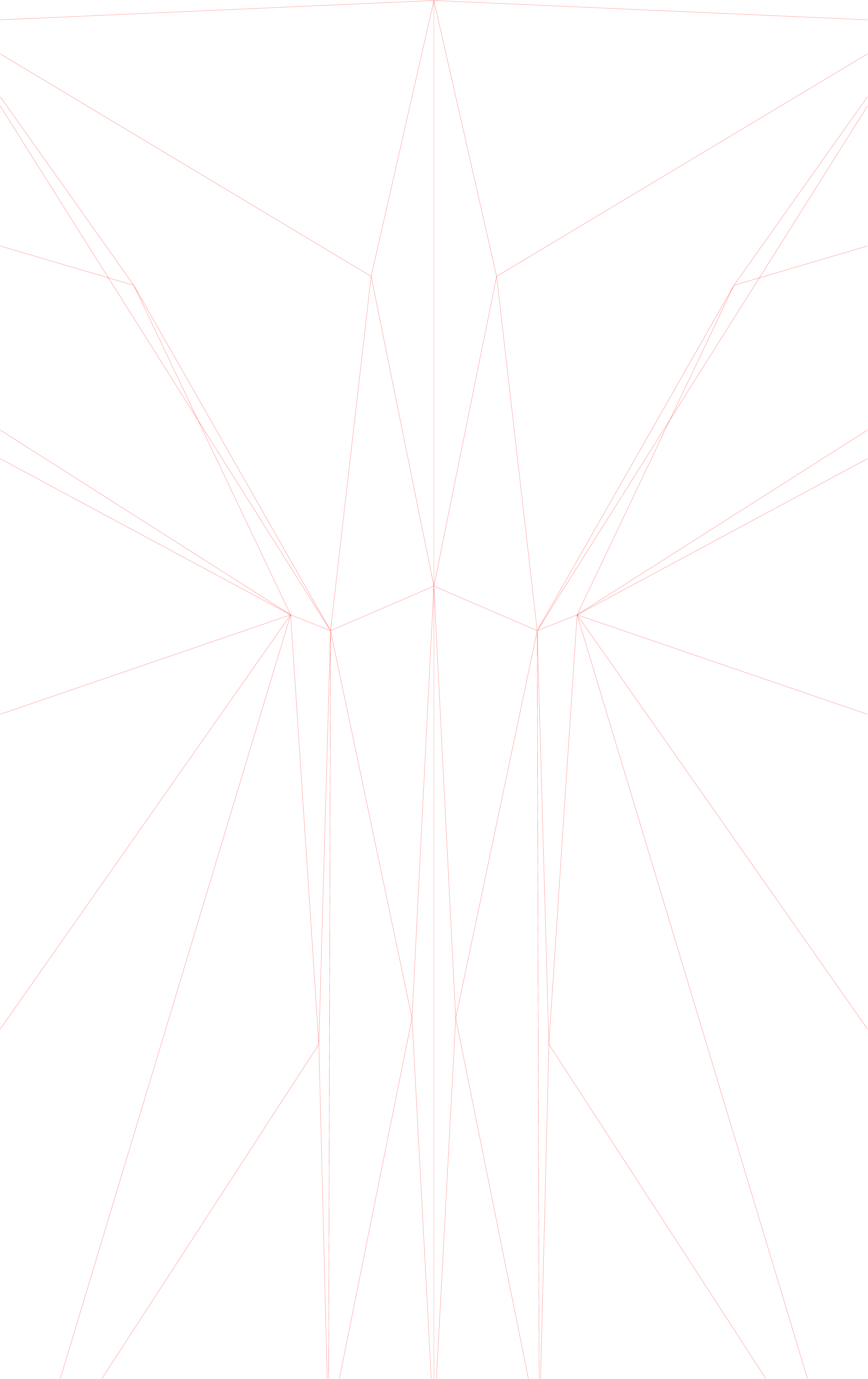}%
      \caption{$113 \times 113$ (zoom at the tip)}
   \end{subfigure}%
   \hfill%
   \begin{subfigure}[t]{0.31\linewidth}
      \includegraphics[clip, width=1.0\linewidth]{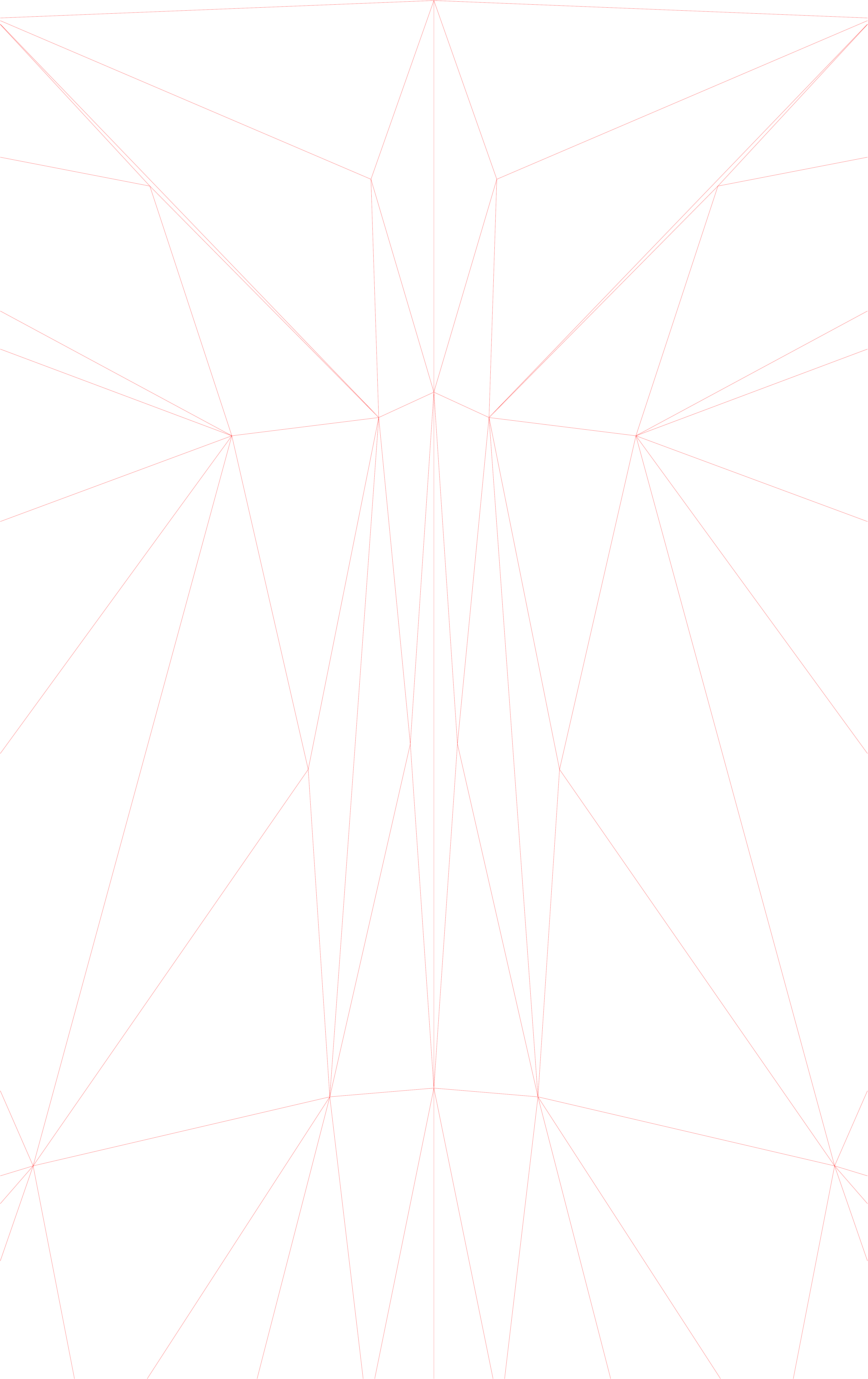}%
      \caption{$125 \times 125$ (zoom at the tip)}
   \end{subfigure}
   \caption{%
      \Cref{ex:horse},
      adaptive $\M$ from \cref{eq:horse:M},
      Winslow's functional \cref{eq:fun:winslow}
   }\label{fig:horse:A:winslow}
\end{figure}%
\end{example}

\begin{example}[3D, nine spheres]
\label{ex:nine:spheres}
In this example we choose $\Omega = (-2,2) \times (-2,2) \times (-2,2)$ and $\M$ to minimize the $L^2$ interpolation error bound (see~\cite{Hua02b} on the choice of $\M$) for
\begin{align*}
   u(\V{x}) &=
      \tanh\left(30\left[{(x-0.0)}^2 + {(y-0.0)}^2 + {(z-0.0)}^2 - 0.1875\right]\right)\\
   &+ \tanh\left(30\left[{(x-0.5)}^2 + {(y-0.5)}^2 + {(z-0.5)}^2 - 0.1875\right]\right)\\
   &+ \tanh\left(30\left[{(x-0.5)}^2 + {(y+0.5)}^2 + {(z-0.5)}^2 - 0.1875\right]\right)\\
   &+ \tanh\left(30\left[{(x+0.5)}^2 + {(y-0.5)}^2 + {(z-0.5)}^2 - 0.1875\right]\right)\\
   &+ \tanh\left(30\left[{(x+0.5)}^2 + {(y+0.5)}^2 + {(z-0.5)}^2 - 0.1875\right]\right)\\
   &+ \tanh\left(30\left[{(x-0.5)}^2 + {(y-0.5)}^2 + {(z+0.5)}^2 - 0.1875\right]\right)\\
   &+ \tanh\left(30\left[{(x-0.5)}^2 + {(y+0.5)}^2 + {(z+0.5)}^2 - 0.1875\right]\right)\\
   &+ \tanh\left(30\left[{(x+0.5)}^2 + {(y-0.5)}^2 + {(z+0.5)}^2 - 0.1875\right]\right)\\
   &+ \tanh\left(30\left[{(x+0.5)}^2 + {(y+0.5)}^2 + {(z+0.5)}^2 - 0.1875\right]\right)
   .
\end{align*}

\Cref{fig:nine:spheres} shows an example of an adaptive mesh and cuts through the mesh in $y$-$z$ plane.
\end{example}

\begin{figure}[p]
   \begin{subfigure}[t]{0.37\linewidth}
      \includegraphics[clip, width=1.0\linewidth]{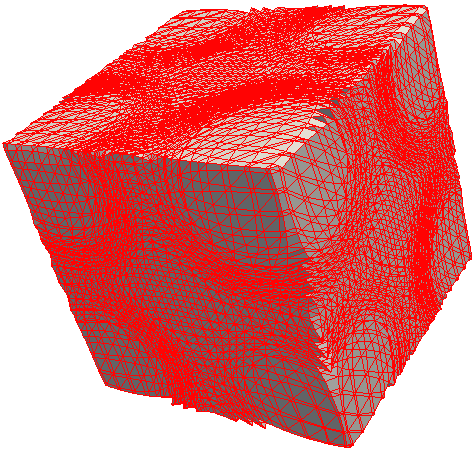}%
      \caption{inside-out cutaway}
   \end{subfigure}%
   \qquad\qquad%
   \begin{subfigure}[t]{0.37\linewidth}
      \includegraphics[clip, width=1.0\linewidth]{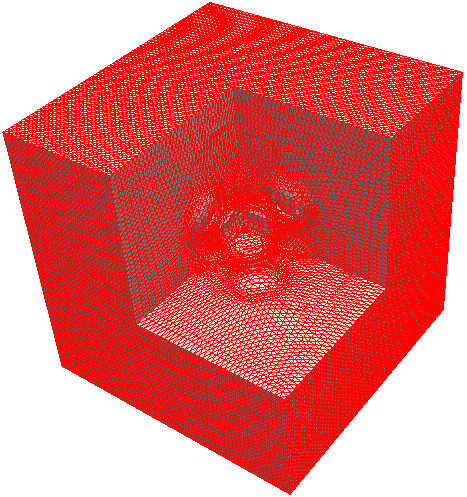}%
      \caption{cutaway}
   \end{subfigure}%
   \\[1ex]
   \begin{subfigure}[t]{0.31\linewidth}
      \includegraphics[clip, width=1.0\linewidth]{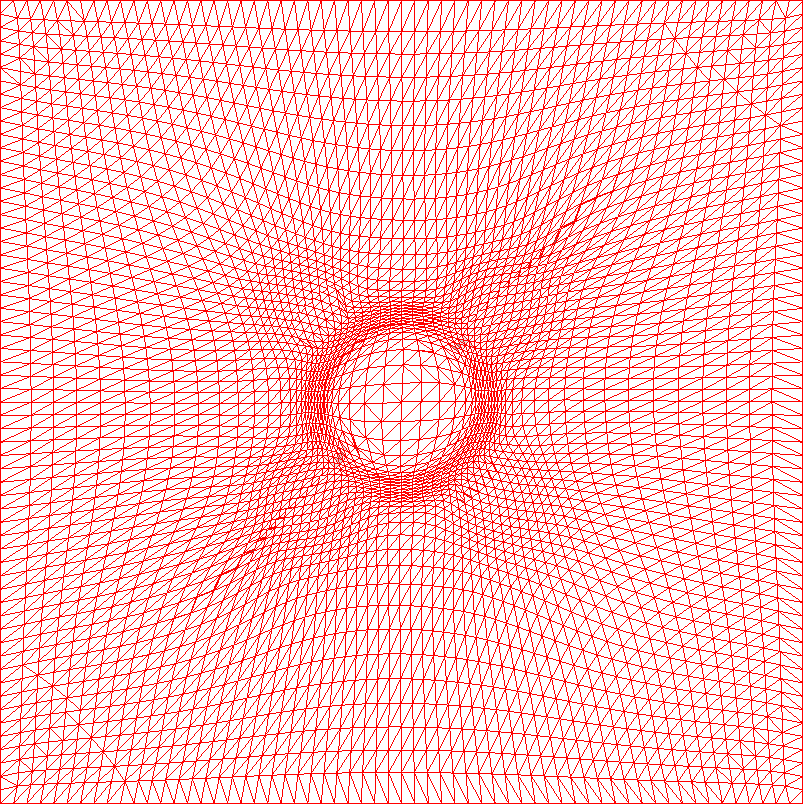}%
      \caption{$y$-$z$ cut at $x = 0$}
   \end{subfigure}%
   \hfill%
   \begin{subfigure}[t]{0.31\linewidth}
      \includegraphics[clip, width=1.0\linewidth]{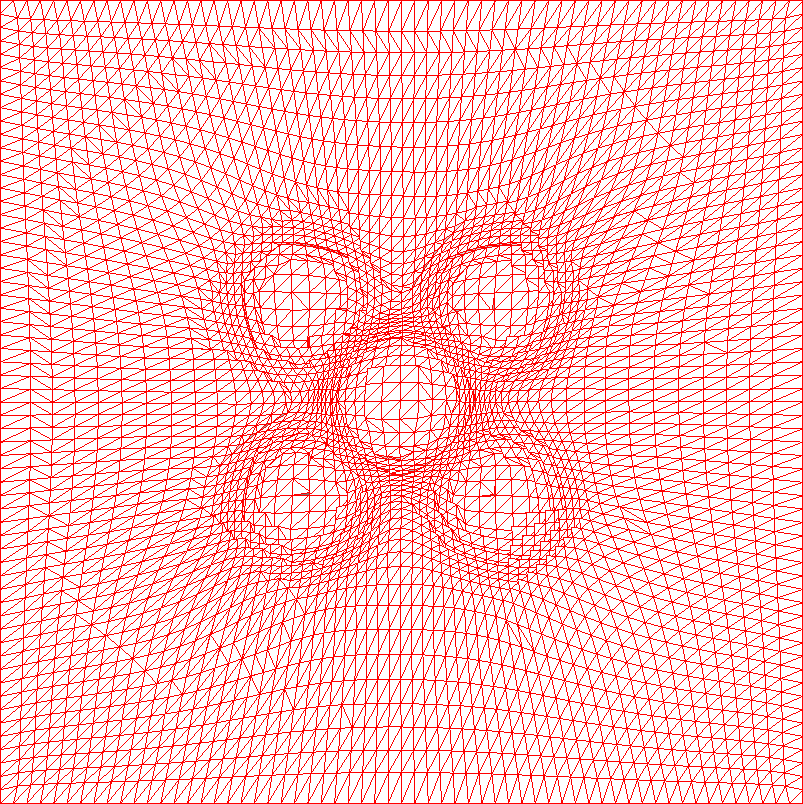}%
      \caption{$y$-$z$ cut at $x = 0.25$}
   \end{subfigure}%
   \hfill%
   \begin{subfigure}[t]{0.31\linewidth}
      \includegraphics[clip, width=1.0\linewidth]{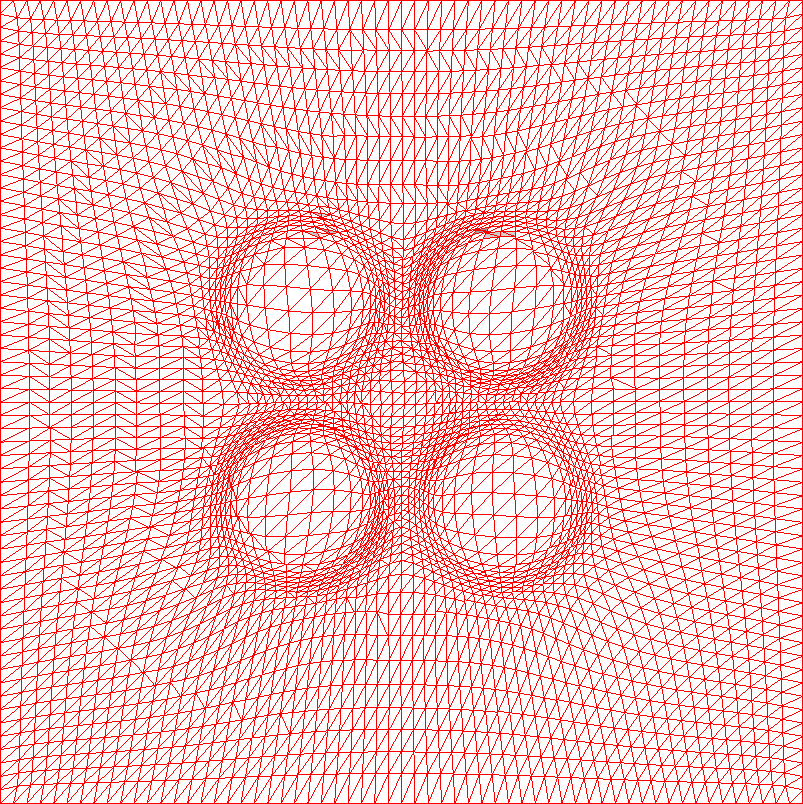}%
      \caption{$y$-$z$ cut at $x = 0.5$}
   \end{subfigure}
   \caption{Adaptive mesh example and $y$-$z$ plane cuts for \cref{ex:nine:spheres}\label{fig:nine:spheres}}
\end{figure}
{
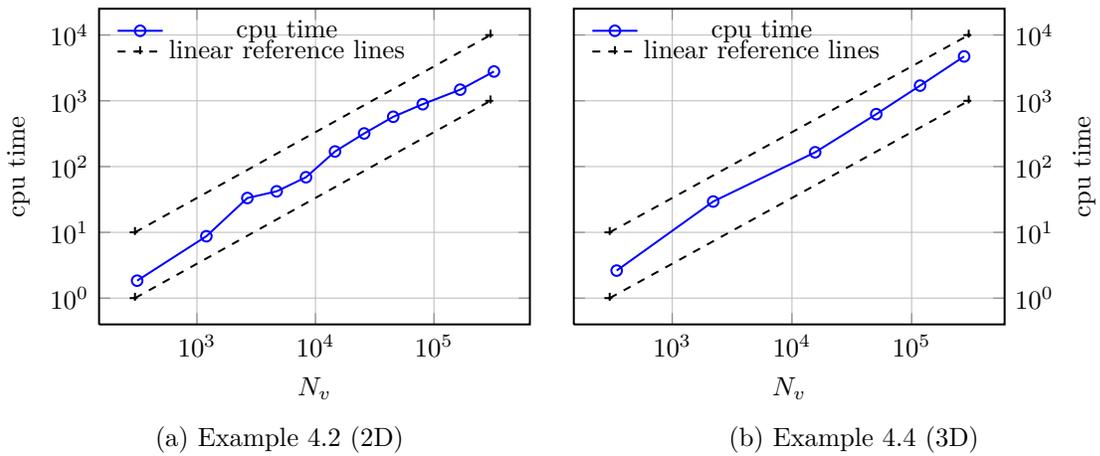
\begin{figure}[p]
   \begin{subfigure}[t]{0.5\linewidth}
      \begin{tikzpicture}
         \begin{loglogaxis}[
            width=0.75\linewidth,%
            height=0.55\linewidth,%
            xlabel={$N_v$},
            ylabel={cpu time},
         ]
            \addplot[blue, solid, mark=o]
               table [x index=0, y index=6, col sep = space]%
               {hk2014-variational-2d-time-iter3.dat};
            \addlegendentry{cpu time}
            \addplot[black, dashed] coordinates {(3.0e+02, 1.0e+01) (3.0e+05, 1.0e+04)};
            \addplot[black, dashed] coordinates {(3.0e+02, 1.0e+00) (3.0e+05, 1.0e+03)};
            \addlegendentry{linear reference lines}
        \end{loglogaxis}
      \end{tikzpicture}
      \caption{\cref{ex:2d:sin} (2D)\label{fig:comp:time:2d}}
   \end{subfigure}%
   \begin{subfigure}[t]{0.5\linewidth}
      \begin{tikzpicture}
         \begin{loglogaxis}[
            width=0.75\linewidth,%
            height=0.55\linewidth,%
            xlabel={$N_v$},
            ylabel={cpu time},
            yticklabel pos=right,
         ]
            \addplot[blue, solid, mark=o]
               table [x index=0, y index=6, col sep = space]%
               {hk2014-variational-3d-time-iter3.dat};
            \addlegendentry{cpu time}
            \addplot[black, dashed] coordinates {(3.0e+02, 1.0e+01) (3.0e+05, 1.0e+04)};
            \addplot[black, dashed] coordinates {(3.0e+02, 1.0e+00) (3.0e+05, 1.0e+03)};
            \addlegendentry{linear reference lines}
        \end{loglogaxis}
      \end{tikzpicture}
      \caption{\cref{ex:nine:spheres} (3D)\label{fig:comp:time:3d}}
   \end{subfigure}
   \caption{CPU time for mesh generation vs.\ number of vertices $N_v$\label{fig:comp:time}}
\end{figure}
}


To test the computational cost of the new method, we measure the CPU time in dependence on the number of mesh nodes for \cref{ex:2d:sin} (2D) and \cref{ex:nine:spheres} (3D) using a simple Matlab implementation running on a system with two Intel Xeons (E5-2665, 2.40 GHz).
Numerical results suggest that the computational cost is linear in the number of mesh nodes for both 2D (\cref{fig:comp:time:2d}) and 3D (\cref{fig:comp:time:3d}) examples; it takes about 10 seconds of CPU time per \num{1000} nodes.

\section{Conclusions and further comments}
\label{sect:conclusion}

In the previous sections we have proposed a direct discretization for a class of functionals used in variational mesh generation and adaptation.
The meshing functionals are discretized directly on simplicial meshes and the Jacobian matrix of the continuous coordinate transformation is approximated by the Jacobian matrices of affine mappings between computational and physical elements (cf.~\cref{eq:dis:fun:1}).
Since the latter are computed from the edge matrices of the elements, there is no need for a direct discretization of derivatives of the coordinate transformation, a daunting task which is typically involved with nonuniform meshes.

Moreover, the proposed discretization has the advantage of preserving the basic geometric structure of the underlying continuous functional.
In particular, it preserves the coercivity and convexity of Winslow's functional and the polyconvexity of Huang's functional (see \cref{sect:preservation}).

A further advantage is the simple analytical formula for derivatives with respect to the coordinates of mesh vertices (cf.~\cref{eq:der:1,eq:der:2}), which allows a simple (and parallel) implementation.
The computational cost is linear in the number of mesh nodes.
A time-varying solution strategy of the mesh equation has also been discussed and applied to a number of numerical examples in \cref{sect:numerics}.
Since variational mesh generation and adaptation is employed as the base for a number of adaptive moving mesh, mesh smoothing, and refinement strategies, the result of this work can be used to develop simple implementations of those methods.

\appendix
\section{Derivatives of~the~discretized functional with respect to~the~physical coordinates}
\label{sect:x:der}

In \cref{sect:formula} we used the computational coordinates $\V{\xi}$ as unknown variables and obtained the physical mesh via linear interpolation.
We can also use the physical coordinates $\V{x}$ as unknown variables and obtain the physical mesh directly (this approach has been considered extensively in the existing literature, e.g.,~\cite{HR11}).
In the following we derive the formulas for the derivatives of the discretized functional with respect to the physical coordinates $\V{x}$ for the convenience of users who prefer this approach.
The formulas are slightly more complicated than those with respect to $\V{\xi}$ and the metric tensor $\M$ has to be updated every time the physical mesh is updated.

To start with, notice that $\Abs{K} I_K$ is now a function of $\V{x}_0^K, \dotsc, \V{x}_d^K$.
We have
\begin{multline}
   \p{(\Abs{K} I_K)}{t}
       = G \p{\Abs{K}}{t}
         + \Abs{K} \tr\left( \p{G}{\J} \p{{(F_K')}^{-1}}{t}\right)
         + \Abs{K} \p{G}{r} \p{{\det(F_K')}^{-1}}{t}
  \\
         + \Abs{K} \sum_{k=1}^d \tr\left( \p{G}{\M} \p{\M}{x^{(k)}} \right)
            \p{x_K^{(k)}}{t} 
         + \Abs{K} \p{G}{\V{x}} \p{\V{x}_K}{t}
   .
   \label{eq:x:der:1}
\end{multline}
In the first term, $\Abs{K} = \frac{1}{d!} \Abs{\det(E_K)} = \frac{1}{d!} \det(E_K) \sgn(\det(E_K))$, where $\sgn$ is the sign function.
From \cref{lem:3.3} we get
\[
   \p{\Abs{K}}{t}
      = \frac{\sgn(\det(E_K))}{d!} \p{\det(E_K)}{t}
      = \frac{\Abs{\det(E_K)}}{d!} \tr\left( E_K^{-1} \p{E_K}{t} \right) 
      = \Abs{K} \tr \left( E_K^{-1} \p{E_K}{t} \right)
   .
\]
For the second term, from \cref{lem:3.1,lem:3.4} and equation \cref{eq:FK:1} we obtain
\begin{align*}
   \tr\left( \p{G}{\J} \p{ {(F_K')}^{-1}}{t} \right)
      & = \tr\left ( \p{G}{\J} \hat{E}_K \p{E_K^{-1}}{t} \right) \\
      & = - \tr\left( \p{G}{\J} \hat{E}_K E_K^{-1} \p{E_K}{t}E_K^{-1} \right) \\
      & = - \tr\left( E_K^{-1} \p{G}{\J} \hat{E}_K E_K^{-1} \p{E_K}{t} \right)
   .
\end{align*}
For the third term, from \cref{lem:3.2} we obtain
\[
   \p{G}{r} \p{ {\det(F_K')}^{-1}}{t}
      = - \p{G}{r} \frac{\det(\hat{E}_K)}{{\det(E_K)}^2} \p{\det(E_K)}{t}
      = - \p{G}{r} \frac{\det(\hat{E}_K)}{\det(E_K)} \tr\left( E_K^{-1} \p{E_K}{t} \right)
   .
\]
Combining the above results  we get
\begin{align}
   \p{(\Abs{K} I_K)}{[\V{x}_1^K,\dotsc,\V{x}_d^K]}
       & = G \Abs{K}  E_K^{-1}
      - \Abs{K} E_K^{-1} \p{G}{\J} \hat{E}_K E_K^{-1}
      - \Abs{K} \p{G}{r} \frac{\det(\hat{E}_K)}{\det(E_K)} E_K^{-1}
   \notag \\
     & \qquad + \Abs{K} {(B_2)}_{d\times d}
      + \Abs{K} {(C_2)}_{d\times d}
   ,
   \label{eq:x:der:2}
\end{align}
and
\begin{align}
   \p{(\Abs{K}I_K)}{\V{x}_0^K} 
      &= - \V{e}^T \left [ G \Abs{K}  E_K^{-1}
      - \Abs{K} E_K^{-1} \p{G}{\J} \hat{E}_K E_K^{-1}
      - \Abs{K} \p{G}{r} \frac{\det(\hat{E}_K)}{\det(E_K)} E_K^{-1} \right ] 
      \notag \\
         & \qquad + \Abs{K} {(B_1)}_{1\times d}
         + \Abs{K} {(C_1)}_{1\times d}
   ,
   \label{eq:x:der:3}
\end{align}
where $B_1$ and $B_2$ are associated with the fourth term in \cref{eq:x:der:1} and $C_1$ and $C_2$ are associated with the fifth term in \cref{eq:x:der:1}.

We first derive $C_1$ and $C_2$. 
Since $\V{x}_K = \frac{1}{d+1} \sum\limits_{k=0}^d \V{x}_{k}^K$, we have
\[
   \p{G}{\V{x}} \p{\V{x}_K}{t} 
      = \frac{1}{d+1} \sum_{k=0}^d \p{G}{\V{x}} \p{\V{x}_{k}^K}{t}
      = \frac{1}{d+1} \sum_{k=0}^d \sum_{l=1}^d 
         \p{G}{x^{(l)}} \p{x_{k}^{K (l)}}{t}
\]
and therefore
\[
   \p{G}{\V{x}} \p{\V{x}_K}{x_{i}^{K (j)}}  
      = \frac{1}{d+1} \sum_{k=0}^d \sum_{l=1}^d \p{G}{x^{(l)}}
         \p{x_{k}^{K (l)}}{x_{i}^{K (j)}}
      = \frac{1}{d+1} \p{G}{x^{(j)}}
   .
\]
This gives
\begin{equation}
   \begin{bmatrix} C_1 \\ C_2 \end{bmatrix}  
      =  \frac{1}{d+1} \begin{bmatrix} \p{G}{\V{x}} \\ \vdots \\ \p{G}{\V{x}} \end{bmatrix}
   .
   \label{eq:x:der:4}
\end{equation}

The main difficulty in computing the fourth term and finding $B_1$ and $B_2$ is that $\M$ is typically defined on a background mesh as a piecewise linear function and therefore its derivatives do not exist on mesh facets, vertices, and edges.
To avoid this difficulty, we assume that $\M$ has been interpolated from the background mesh to the current mesh $\Th$ and the derivative $\p{\M}{x^{(k)}}$ is approximated by that of the interpolating function, i.e., 
\[
   \M = \sum_{j=0}^d \M_{j,K} \phi_{j, K}
   \qquad \text{and} \qquad
   \p{\M}{x^{(k)}} = \sum_{j=0}^d \M_{j,K} \p{\phi_{j,K}}{x^{(k)}}
   ,
\]
where $\M_{j,K}$ is the value of the metric tensor and $\phi_{j,K}$ is the linear basis function at the vertex $\V{x}_{j}^K$.
Then,
\begin{align*}
   \sum_{k=1}^d \tr\left( \p{G}{\M} \p{\M}{x^{(k)}} \right) \p{x_K^{(k)}}{t}
      &= \sum_{k=1}^d \sum_{j=0}^d \tr\left( \p{G}{\M} \M_{j,K} \right)
      \p{\phi_{j,K}}{x^{(k)}} \p{x_K^{(k)}}{t}
   \\
      &=  \sum_{j=0}^d \tr\left( \p{G}{\M} \M_{j,K} \right)  
         \p{\phi_{j,K}}{\V{x}} \p{\V{x}_K}{t}
   ,
\end{align*}
which gives
\begin{equation}
   \begin{bmatrix} B_1 \\ B_2 \end{bmatrix}  
      =  \frac{1}{d+1} \sum_{j=0}^{d} \tr\left( \p{G}{\M} \M_{j, K} \right)
      \begin{bmatrix} \p{\phi_{j,K}}{\V{x}} \\ \vdots \\ \p{\phi_{j,K}}{\V{x}} \end{bmatrix} .
   \label{eq:x:der:5}
\end{equation}

The derivative $\p{\phi_{j,K}}{\V{x}}  = {\left(\nabla \phi_{j, K}\right)}^T$ is computed as follows.
The basis functions satisfy
\[
   \sum_{j=0}^d \phi_{j,K} = 1
   \qquad \text{and} \qquad
   \sum_{j=0}^d \V{x}_{j}^K \phi_{j, K} = \V{x}
   .
\]
Eliminating $\V{x}_{0, K}$ yields
\[
   \sum_{j=1}^d \left( \V{x}_{j}^K-\V{x}_0^K \right) \phi_{j,K} 
      = \V{x} - \V{x}_0^K
\]
and differentiating this with respect to $x^{(k)}$ gives
\[
   \sum_{j=1}^d (\V{x}_{j}^K-\V{x}_0^K)\p{\phi_{j,K}}{x^{(k)}} 
      = \V{e}_k
   ,
\]
where $\V{e}_k$ is the $k^{\text{th}}$ unit vector of $\R^d$.
Hence,
\begin{equation}
 \begin{bmatrix} \p{\phi_{1,K}}{\V{x}} \\ \vdots \\ \p{\phi_{d,K}}{\V{x}}\end{bmatrix} = E_K^{-1}
   \qquad \text{and} \qquad
   \p{\phi_{0,K}}{\V{x}} = - \sum_{j=1}^d \p{\phi_{j,K}}{\V{x}} 
   .
   \label{eq:x:der:6}
\end{equation}

Like (\ref{eq:mmpde:4}), we can write the MMPDE approach of the mesh equation in terms of mesh velocities.
It reads as
\begin{equation}
\label{eq:mmpde:6}
\p{\V{x}_i}{t} = \frac{P_i}{\tau} \sum_{K \in \omega_i} |K| \V{v}_{i_K}^K,\quad i = 1,\dotsc, N_v
\end{equation}
where the balancing parameter $P_i$ is defined in \cref{eqP:1} and the local velocities are given by
\begin{align}
\begin{bmatrix} {(\V{v}_{1}^K)}^T \\ \vdots \\ {(\V{v}_{d}^K)}^T \end{bmatrix}
& = - G E_K^{-1}
      + E_K^{-1} \p{G}{\J} \hat{E}_K E_K^{-1}
      + \p{G}{r} \frac{\det(\hat{E}_K)}{\det(E_K)} E_K^{-1}
\notag \\
& \quad - \frac{1}{d+1} \sum_{j=0}^{d} \tr\left( \p{G}{\M} \M_{j, K} \right)
      \begin{bmatrix} \p{\phi_{j,K}}{\V{x}} \\ \vdots \\ \p{\phi_{j,K}}{\V{x}} \end{bmatrix} 
     - \frac{1}{d+1} \begin{bmatrix} \p{G}{\V{x}} \\ \vdots \\ \p{G}{\V{x}} \end{bmatrix},
\label{eq:mmpde:7}
\\
{(\V{v}_{0}^K)}^T & = - \sum_{k=1}^d {(\V{v}_{k}^K)}^T 
- \sum_{j=0}^{d} \tr\left( \p{G}{\M} \M_{j, K} \right) \p{\phi_{j,K}}{\V{x}}
-  \p{G}{\V{x}} .
\label{eq:mmpde:8}
\end{align}

\printbibliography{}


\end{document}